\documentclass[reqno, 12pt]{amsart} 

\usepackage{fmtcount}
\usepackage[expansion=false]{microtype} 
\usepackage{amsfonts,amsthm,amsmath,amssymb,amscd,mathrsfs} 
\allowdisplaybreaks  
\usepackage{latexsym} 
\usepackage{xcolor, color} 
\usepackage[colorlinks=true,linkcolor=blue,citecolor=teal,urlcolor=black]{hyperref} 
\usepackage{graphicx}  
\usepackage{indentfirst} 

\usepackage{mathtools}
\DeclarePairedDelimiter{\norm}{\lVert}{\rVert}

\renewcommand{\MR}[1]{}

\usepackage{lmodern} 
\usepackage{soul}     
\usepackage{wasysym}  

\theoremstyle{plain} 
\newtheorem{Thm}{Theorem}[section] 
\newtheorem{Lem}[Thm]{Lemma}     
\newtheorem{Prop}[Thm]{Proposition}
\newtheorem{Cor}[Thm]{Corollary}
\theoremstyle{definition}

\theoremstyle{remark}
\newtheorem{Rem}[Thm]{Remark}
\numberwithin{equation}{section} 

\newcommand{\beq}{\begin{equation}}            
	\newcommand{\eeq}{\end{equation}}
\newcommand{\ben}{\begin{eqnarray}}         
	\newcommand{\een}{\end{eqnarray}}
\newcommand{\beno}{\begin{eqnarray*}}
	\newcommand{\eeno}{\end{eqnarray*}}

\newcommand{\alabel}{\stepcounter{equation}\tag{\theequation}\label}  
\newcommand{\R}{\mathbb{R}}
\newcommand{\curl}{\operatorname{curl}}
\newcommand{\diver}{\operatorname{div}}

\newcommand{\A}{\mathcal{A}}
\newcommand{\Q}{\mathcal{Q}}

\newcommand{\pv}{\operatorname{p.v.}}
\newcommand{\ee}{\mathrm{e}}
\newcommand{\Lr}[1]{\log(\ee+#1)}

\usepackage[
letterpaper,                 
textheight=8.35in,           
headsep=20pt,                
footskip=36pt,               
marginparwidth=0pt,          
marginparsep=0pt,            
left=0.75in, 
right=0.75in
]{geometry}

\title[New decay estimates and Liouville type theorems]{New decay estimates and Liouville type theorems for the 3D axisymmetric stationary Navier--Stokes equations}

\author{
	Wendong~Wang$^{1}$ \and
	Guoxu~Yang$^{1,*}$
}

\thanks{$^{1}$School of Mathematical Sciences, Dalian University of Technology, 
	Dalian 116024, China.}

\thanks{E-mail addresses: 
	wendong@dlut.edu.cn (Wendong Wang), 
	guoxu\_dlut@outlook.com (Guoxu Yang).}

\thanks{$^{*}$Author to whom correspondence should be addressed.}

\subjclass[2020]{Primary 35Q30; Secondary 35B40, 35B53, 76D05;}

\keywords{stationary Navier--Stokes equations; axisymmetric flows; D-solutions; decay estimates; Liouville theorem}

\begin{document}

\begin{abstract}
	The Liouville problem for the three-dimensional stationary Navier--Stokes equations remains open, even for axisymmetric \(D\)-solutions. In this paper, we obtain two results based on decay in the cylindrical radial variable \(r=|x'|\). \\
	(i). Using a new pointwise Calderón--Zygmund estimate adapted to cylindrical geometry, we improve the decay estimates of Carrillo--Pan--Zhang (2020, JFA) and prove
	\[
	|\nabla u_r|+|\nabla u_z|
	\lesssim r^{-5/4}[\log(\mathrm e+r)]^{5/4},
	\quad
	|\omega_r|+|\omega_z|
	\lesssim r^{-9/8}[\log(\mathrm e+r)]^{9/8}, \quad r\gg1.
	\]
	(ii). We develop a new approach to Liouville theorems that improves the axisymmetric criteria of Wang (2019, JDE) and Zhao (2019, Nonlinear Anal.). Without any symmetry assumption, we show that a \(D\)-solution is trivial if one of the following holds:
	\[ (\mathrm a).\,\sup_{{|x'|=r,\, z\in\mathbb R}}
	|u(x',z)|
	\leq
	Cr^{-2/3}[\log(\mathrm e+r)]^{-\gamma}; \quad (\mathrm b).\,
	\sup_{{|x'|=r,\, z\in\mathbb R}}
	|\omega(x',z)|
	\leq
	Cr^{-5/3}[\log(\mathrm e+r)]^{-\gamma},
	\]
	for $r\geq1$, where $\gamma>1/3$.
\end{abstract}

\maketitle


\section{Introduction}

\subsection{Background and previous results}

We consider the steady incompressible Navier--Stokes equations in $\R^3$:
\begin{equation}
	\left\{
	\begin{aligned}
		-\Delta u+(u\cdot\nabla)u+\nabla p&=0,\\
		\diver u&=0,\\
		\lim_{|x|\to\infty}u(x)&=0.
	\end{aligned}
	\right.
	\label{eq:NS}
\end{equation}
A smooth solution of \eqref{eq:NS} satisfying
\begin{equation}
	\int_{\R^3}|\nabla u|^2\,dx<\infty
	\label{eq:Dsolution}
\end{equation}
is called a $D$-solution. The finite Dirichlet integral \eqref{eq:Dsolution} is natural in the study of stationary viscous flows and goes back to Leray \cite{L1933}. By the Sobolev inequality, every $D$-solution satisfies
\begin{equation}
	u\in L^6(\R^3).
	\label{eq:L6}
\end{equation}

Liouville theorems relate the behavior of a stationary flow at infinity to its global rigidity. A fundamental open problem asks whether every three-dimensional \(D\)-solution of \eqref{eq:NS} must be identically zero. A classical sufficient condition is \(u\in L^{9/2}(\R^3)\); see \cite{G2011}. Chae established a distinct scale-invariant criterion \(\Delta u\in L^{6/5}(\R^3)\), whereas Chae--Wolf obtained a logarithmic refinement of Galdi's \(L^{9/2}\)-criterion.
Seregin \cite{S2016} introduced a criterion in \(BMO^{-1}(\R^3)\), Kozono--Terasawa--Wakasugi \cite{KTW2017} considered vorticity-decay and Lorentz-space conditions, and Seregin--Wang \cite{SW2020} developed corresponding annular criteria; see also \cite{T2021} and the references therein. Further criteria based on the head pressure \(Q=\frac12|u|^2+p\) were obtained by Chae \cite{C2021}, who proved triviality under relative-decay assumptions comparing \(u\) or \(p\) with \(Q\) at infinity. Bang--Yang \cite{BY2025} employed Saint--Venant estimates to derive Liouville theorems from the growth of the \(L^s\)-mean oscillation of a potential function of the velocity and from the relative decay of \(Q\) and \(|u|^2\). Chae \cite{C2025} subsequently established further head-pressure criteria through Osgood-type conditions, level-set arguments, and the coarea formula. Coiculescu--Yang \cite{CY2026} introduced a capsule-based approach and obtained improved conditional Liouville theorems under sufficiently slow sublinear growth assumptions on antiderivatives of the velocity, including vector potentials and line integrals. More recently, Chae \cite{C2026} considered D-solutions whose head pressure \(Q\), normalized to vanish at infinity, satisfies the lower power-law bound \(|Q(x)|\geq C\|Q\|_{L^\infty}|x|^{-\alpha}\) for all sufficiently large \(|x|\). He proved that \(u\equiv0\) if \(|u(x)|=O(|x|^{-\beta})\) for some \(\beta\geq\frac{\alpha}{2}\), whereas \(u\) must be constant if \(|\nabla Q(x)|=O(|x|^{-\beta})\) for some \(\beta\geq2\alpha\).

Symmetry-restricted flows form a natural intermediate setting in which genuinely three-dimensional structures remain accessible to sharper analysis. For helically symmetric flows, Han--Wang--Xie \cite{HWX2026} proved that every bounded smooth solution of the steady Navier--Stokes equations in \(\R^3\) is a constant vector, using the helical identity and axial periodicity to establish a Saint--Venant-type estimate. Axisymmetric flows retain genuinely three-dimensional phenomena, including swirl, while their cylindrical structure permits refined pointwise estimates. In the no-swirl case \(u_\theta\equiv0\), Liouville theorems were established by Koch--Nadirashvili--Seregin--{\v S}ver\'ak \cite{KNSS2009} and Korobkov--Pileckas--Russo \cite{KPR2015}. For axisymmetric flows with swirl, Chae--Weng \cite{CW201601} obtained several sufficient conditions for triviality. A complementary line of research concerns the asymptotic behavior of axisymmetric \(D\)-solutions. Pointwise decay estimates were developed by Choe--Jin \cite{CJ2009}, Weng \cite{W2018}, and Carrillo--Pan--Zhang \cite{CPZ2020}; in particular, Carrillo--Pan--Zhang derived logarithmic decay bounds for the velocity and the vorticity components. Within the framework of pointwise cylindrical decay, Seregin \cite{S2018} proved that a bounded smooth axisymmetric solution must vanish if, uniformly in the axial variable, the velocity satisfies \(O(r^{-\alpha})\) for some $\alpha>\frac{15-\sqrt{33}}{12}$. Subsequently, Wang \cite{W2019} and Zhao \cite{Z2019} independently improved this velocity-decay criterion by showing that an axisymmetric \(D\)-solution must vanish if the velocity satisfies \(O(r^{-\alpha})\) for some \(\alpha>\frac23\), or if the full vorticity satisfies \(O(r^{-\beta})\) for some \(\beta>\frac53\), uniformly in the axial variable.

Motivated by these studies, we study Liouville-type theorems for the axisymmetric Navier--Stokes equations in this paper.

\subsection{Main results and ideas}

Fix a direction in $\R^3$ and write
\begin{equation}
	x=(x',z)\in\R^2\times\R,
	\qquad
	r=|x'|.
	\label{eq:splitting}
\end{equation}
Let $L(s):=\log(\ee+s)$ with $s\geq0$. We use the standard cylindrical components $u_r,u_\theta,u_z$ of an axisymmetric velocity field and $\omega_r,\omega_\theta,\omega_z$ of its vorticity $\omega=\curl u$. When $\nabla$ acts on one of these scalar components, it means the meridional gradient $\nabla_{r,z}$; the precise convention is given in Subsection \ref{subsec:notation}. For $R>0$, define the cylindrical annulus
\begin{equation}
	\A_R:=\{(x',z)\in\R^2\times\R:R\leq |x'|\leq2R\}.
	\label{eq:cylindrical-annulus}
\end{equation}

Our first main result is the following.

\begin{Thm}
	\label{thm:main}
	Let $u$ be a smooth axisymmetric solution of \eqref{eq:NS}. Assume that there exists $M>0$ such that
	\begin{equation}
		\sup_{R\geq1}\int_{\A_R}\bigl(|\nabla u(x)|^2+|u(x)|^6\bigr)\,dx\leq M.
		\label{eq:annular-assumption}
	\end{equation}
	Then there exist \(R_0=R_0(M,u)\geq 2\) and \(C=C(M)>0\) such that, for every $r\geq R_0$ and every $z\in\R$,
	\begin{equation}
		|u(r,z)|\leq C r^{-1/2}[\Lr r]^{1/2},
		\label{eq:velocity-final}
	\end{equation}
	\begin{equation}
		|\omega_\theta(r,z)|+|\nabla\omega_\theta(r,z)|\leq C r^{-5/4}[\Lr r]^{3/4},
		\label{eq:wtheta-final}
	\end{equation}
	\begin{equation}
		|\nabla u_r(r,z)|+|\nabla u_z(r,z)|\leq C r^{-5/4}[\Lr r]^{5/4},
		\label{eq:grad-b-final}
	\end{equation}
	and
	\begin{equation}
		|\omega_r(r,z)|+|\omega_z(r,z)|\leq C r^{-9/8}[\Lr r]^{9/8}.
		\label{eq:wm-final}
	\end{equation}
\end{Thm}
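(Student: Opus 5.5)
We follow the scheme of Carrillo--Pan--Zhang: first a velocity bound by rescaling, then estimates for $\omega_\theta$ from its good equation, and finally gradient bounds for $u_r,u_z$ from the div--curl system $(u_r,u_z)\leftrightarrow\omega_\theta$. The new ingredient is a pointwise Calder\'on--Zygmund estimate in the meridional plane, localized to a box of size comparable to $r$.

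\emph{Step 1 (velocity).} Fix $(r_0,z_0)$ with $r_0\ge R_0$. On the cylindrical annulus $\A_{r_0/2}$, axisymmetry turns \eqref{eq:annular-assumption} into a two-dimensional bound. Writing $dx=2\pi r\,dr\,dz$, we get $\int|\nabla u|^2\,dr\,dz\lesssim M/r_0$ over the strip $r\sim r_0$. The 2D Brezis--Gallouet/Trudinger-type embedding on unit boxes, combined with a log-weighted summation over dyadic scales, then gives $|u|\lesssim r^{-1/2}L(r)^{1/2}$. This is essentially CPZ's argument, and we reuse the $L^6$ bound to control the mean of $u$ on each box.

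\emph{Step 2 ($\omega_\theta$).} The quantity $\Omega=\omega_\theta/r$ satisfies
\[
(\partial_r^2+\partial_z^2+\tfrac3r\partial_r)\Omega-(b\cdot\nabla)\Omega=\partial_z(u_\theta^2)/r^2 .
\]
Equivalently, $\omega_\theta$ solves a drift equation with forcing $\partial_z(u_\theta^2)/r$ and potential $-\omega_\theta/r^2$. Rescale the box $B_{r_0/2}(r_0,z_0)$ to unit size, so the drift becomes $r_0 b(r_0\cdot)$. By Step 1 its size is $\sim r_0^{1/2}L^{1/2}$, which is large. To avoid a bad drift constant, we instead work on boxes of side $\rho=r_0^{1/2}L^{-1/2}$, on which the rescaled drift is $O(1)$. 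Local $L^\infty$ and $C^{1}$ Schauder bounds then estimate $|\omega_\theta|$ and $\rho|\nabla\omega_\theta|$ by the $L^2$ average of $\omega_\theta$ on that box, plus the forcing $\rho\,|u_\theta|^2/r_0\lesssim \rho L/r_0^2$. The $L^2$ average is at most $(M/(r_0\rho^2))^{1/2}$. Optimizing should yield $r_0^{-1}L^{1/2}$ for $\omega_\theta$. To reach the claimed $r^{-5/4}L^{3/4}$ for both $\omega_\theta$ and $\nabla\omega_\theta$, we iterate once: insert the improved $\omega_\theta$ bound back into a weighted energy estimate for $\Omega$ on the box of size $\rho$, and use interpolation between the $L^2$ and $\dot H^1$ norms of $\omega_\theta$. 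The exponents $5/4$ and $3/4$ come from balancing $\rho=r^{1/2}$ factors.

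\emph{Step 3 (gradients of $u_r,u_z$ and $\omega_r,\omega_z$).} We have $\partial_r u_r+u_r/r+\partial_z u_z=0$ and $\partial_z u_r-\partial_r u_z=\omega_\theta$. Localize with a cutoff on a box of side $\ell\sim r_0$ (or an intermediate scale). Then $\nabla b$ equals a 2D Calder\'on--Zygmund operator applied to $\chi\omega_\theta$ and $\chi u_r/r$, plus harmonic remainder terms controlled by $\sup|b|/\ell$. The pointwise CZ estimate bounds the singular integral by
\[
\|\omega_\theta\|_{L^\infty}\log\Big(e+\frac{\ell\,\|\nabla\omega_\theta\|_\infty}{\|\omega_\theta\|_\infty}\Big),
\]
which contributes the extra logarithmic power. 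The cutoff contributes $r^{-1/2}L^{1/2}/\ell$. Choosing $\ell$ to balance the two gives $r^{-5/4}L^{5/4}$.

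For $\omega_r=-\partial_z u_\theta$ and $\omega_z=\partial_r u_\theta+u_\theta/r$, we use the equation for $\Gamma=ru_\theta$, namely $(\Delta_{r,z}-\tfrac1r\partial_r)\Gamma=b\cdot\nabla\Gamma$. Since $|\Gamma|\lesssim r^{1/2}L^{1/2}$, gradient estimates on a box of side $\rho'$ with drift $|b|\rho'\lesssim1$ give $|\nabla u_\theta|\lesssim |u_\theta|/\rho'$. With only Step 1 this yields $r^{-1}L$. Using Step 3's bound on $\nabla b$ as well, i.e.\ Schauder estimates with a Lipschitz drift, allows a larger $\rho'$ and an interpolation of $\nabla u_\theta$ between $u_\theta$ and $\nabla^2u_\theta$, which gives the geometric mean exponent $9/8$.

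The main obstacle is Step 2, namely obtaining the $r^{-5/4}$ rate for $\omega_\theta$ \emph{and} its gradient with only a log loss. The large drift forces small boxes, which lose powers, while the forcing $\partial_z(u_\theta^2)/r$ is only barely integrable. If a single local estimate does not reach the rate, we would first try a bootstrap alternating between Steps 2 and 3: improved $\nabla b$ gives better drift regularity and hence larger admissible boxes.
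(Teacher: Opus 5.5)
\textbf{Gap in Step 3.} Your Step 3 does not prove \eqref{eq:grad-b-final}. The balancing is miscomputed, and the argument as written only reproduces the old Carrillo--Pan--Zhang power $7/4$. Let $A:=r^{-5/4}L(r)^{3/4}$ denote the known bound on both $\omega_\theta$ and $\nabla\omega_\theta$. The map $s\mapsto s\log(\mathrm e+K/s)$ is increasing, so from this information your log-Lipschitz Calder\'on--Zygmund bound gives at best $A\log(\mathrm e+\ell)$.

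The cutoff remainder $r^{-1/2}L^{1/2}/\ell$ is below $r^{-5/4}L^{5/4}$ only if $\ell\gtrsim r^{3/4}L^{-3/4}$. Then $\log\ell\gtrsim L(r)$, so the singular part is of size $r^{-5/4}L^{7/4}$. Reaching $5/4$ would need $\log\ell\lesssim L^{1/2}$, which contradicts that lower bound on $\ell$. The structural reason is that each of the $\sim\log r$ dyadic shells between scales $1$ and $r$ contributes the full oscillation $\|\omega_\theta\|_\infty\sim A$. No choice of $\ell$ changes this.

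\textbf{The missing idea.} You need to measure the oscillation of $\omega_\theta$ on each shell in $L^2$ rather than $L^\infty$. The paper writes $\nabla(u_r,u_z)$ through the differentiated 3D Biot--Savart law with $F=\omega_\theta e_\theta$. There is then no cutoff remainder, and the far field is handled by the decay of $F$.

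On each shell $s_j<|x_0-y|\le2s_j$, the paper uses kernel cancellation to subtract the mean of $\omega_\theta$ over $B_{2s_j}(x_0)$, then applies Cauchy--Schwarz and Poincar\'e. It also uses rotational packing for axisymmetric functions: $\int_{B_{2s}(x_0)}|g|^2\lesssim (s/R)\int_{\mathcal Q_R'(z_0)}|g|^2$. As a result, each shell costs $R^{-1/2}\|\nabla\omega_\theta\|_{L^2(\mathcal Q_R'(z_0))}$, uniformly in $s_j$. In your meridional language, this is a BMO bound controlled by the 2D Dirichlet energy of $\omega_\theta$ on the box.

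The second input is the localized energy estimate at scale $R$. Testing the $\omega_\theta$-equation against $\omega_\theta\psi^2$, the drift enters only linearly, through $\|u\|_\infty|\nabla\psi|$. This gives $\|\nabla\omega_\theta\|_{L^2(\mathcal Q_R'(z_0))}\lesssim R^{-3/4}L^{1/4}$. Each shell then costs $R^{-5/4}L^{1/4}$ instead of $R^{-5/4}L^{3/4}$, and the $\log R$ shells sum to $R^{-5/4}L^{5/4}$.

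\textbf{Consequences for the other steps.} For \eqref{eq:wm-final}, your $\Gamma$-interpolation would require $|\nabla^2u_\theta|\lesssim r^{-7/4}L^{7/4}$, which you never derive. Enlarging boxes with $|b|\rho'\gg1$ does not help either. For $\Delta v=b\cdot\nabla v$, any bound on $\nabla v$ in terms of $\sup|v|$ cannot beat $|b|\sup|v|$; take $v=\mathrm e^{|b|x_1}$ with constant drift. This holds however regular the drift is.

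The paper instead uses the local $H^1$ energy estimate for $(\omega_r,\omega_z)$ at scale $R$, where the stretching term contributes $\|\nabla(u_r,u_z)\|_\infty$ linearly, followed by the scaled Brezis--Gallouet inequality. This yields $R^{-1/2}L^{1/2}\|\nabla(u_r,u_z)\|_\infty^{1/2}=R^{-9/8}L^{9/8}$, but only with the $5/4$ power in \eqref{eq:grad-b-final}; with $7/4$ one gets $11/8$.

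Similarly, your small-box Schauder route in Step 2 stops at $r^{-1}L^{1/2}$, and the ``iterate once'' step is unspecified. The bounds \eqref{eq:velocity-final}--\eqref{eq:wtheta-final} are known and can simply be cited from Carrillo--Pan--Zhang. Their mechanism is again an energy estimate at scale $r$ combined with Brezis--Gallouet, not small boxes.
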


\begin{Rem}	
	The velocity estimate \eqref{eq:velocity-final} is due to Choe--Jin \cite{CJ2009} and Weng \cite{W2018}, while the bound for $\omega_\theta$ in \eqref{eq:wtheta-final} is due to Carrillo--Pan--Zhang \cite{CPZ2020}. Choe--Jin \cite{CJ2009} first proved
	\[
	|u_r|+|u_z|\lesssim r^{-\frac12} (\log r)^\frac12,\quad
	|u_\theta|\lesssim {r^{-\frac38}}{(\log r)^{\frac18}},\quad
	|\omega_\theta|\lesssim r^{-\frac78},
	\]
	which was improved by Weng \cite{W2018}:
	\[
	|u|\lesssim r^{-\frac12} (\log r)^\frac12,\quad
	|\omega_\theta|\lesssim r^{-\left(\frac{19}{16}\right)^-},\quad
	|\nabla u_r|+|\nabla u_z|\lesssim r^{-\left(\frac98\right)^-},\quad
	|\omega_r|+|\omega_z|\lesssim r^{-\left(\frac{67}{64}\right)^-}.
	\]
	Later, Carrillo--Pan--Zhang \cite{CPZ2020} further improved the decay rates of the vorticity and the gradients of the meridional velocity components to
	\[
	|\omega_\theta|\lesssim {r^{-\frac54}}{(\log r)^{\frac34}},\qquad
	|\nabla u_r|+|\nabla u_z|\lesssim {r^{-\frac54}}{(\log r)^{\frac74}},\qquad
	|\omega_r|+|\omega_z|\lesssim {r^{-\frac98}}{(\log r)^{\frac{11}{8}}}.
	\]
	In this paper, the new estimates are \eqref{eq:grad-b-final} and \eqref{eq:wm-final}. Thus the logarithmic exponents are improved according to
	\[
	\frac74\longrightarrow\frac54,\qquad \frac{11}{8}\longrightarrow\frac98.
	\]
\end{Rem}

The main new ingredient is a new Calderón--Zygmund estimate for an axisymmetric azimuthal field. Rotational packing improves a local $L^2$ norm, while the cancellation of the Calder\'on--Zygmund kernel and the Poincar\'e inequality are used on each intermediate shell. In the notation of Lemma \ref{lem:refined-CZ}, the resulting estimate is
	\[
	|TF(x_0)|\lesssim A R^{-a}[\Lr R]^\beta+B_*R_*^2R^{-2}+R^{-1/2}\Lr R\,\|\nabla_{r,z}f\|_{L^2(\Q_R'(z_0))}.
	\]
A localized vorticity energy estimate gives
	\[
	\|\nabla\omega_\theta\|_{L^2(r\sim R,\,|z-z_0|\lesssim R)}\lesssim R^{-3/4}[\Lr R]^{1/4}.
	\]
These two estimates yield the logarithmic power $5/4$ in \eqref{eq:grad-b-final}. The estimate \eqref{eq:wm-final} then follows from a scaled Brezis--Gallouet inequality \cite{BG1980} for $(\omega_r,\omega_z)$.

\begin{Cor}
	The conclusions of Theorem \ref{thm:main} hold for every smooth axisymmetric $D$-solution of \eqref{eq:NS}.
\end{Cor}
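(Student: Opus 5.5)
The plan is to check that every smooth axisymmetric $D$-solution satisfies the annular hypothesis \eqref{eq:annular-assumption} of Theorem \ref{thm:main}, and then invoke that theorem directly. No other hypothesis has to be verified: Theorem \ref{thm:main} only requires a smooth axisymmetric solution of \eqref{eq:NS} together with \eqref{eq:annular-assumption}. The boundary condition $u\to0$ at infinity is already part of \eqref{eq:NS}, which a $D$-solution satisfies by definition.

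The first step is to bound the Dirichlet part. Since the cylindrical annuli $\A_R$ are subsets of $\R^3$, for every $R\geq1$ we have
\[
\int_{\A_R}|\nabla u|^2\,dx\leq\int_{\R^3}|\nabla u|^2\,dx<\infty
\]
by \eqref{eq:Dsolution}.

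The second step is to bound the $L^6$ part. By \eqref{eq:L6}, $u\in L^6(\R^3)$. More precisely, $u$ decays at infinity and has finite Dirichlet integral, so the Sobolev inequality holds in the form
\[
\|u\|_{L^6(\R^3)}\leq C_S\|\nabla u\|_{L^2(\R^3)},
\]
since the constant at infinity is zero. This gives, uniformly in $R\geq 1$,
\[
\int_{\A_R}|u|^6\,dx\leq C_S^6\Bigl(\int_{\R^3}|\nabla u|^2\,dx\Bigr)^3.
\]

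Combining the two steps, \eqref{eq:annular-assumption} holds with
\[
M:=\|\nabla u\|_{L^2(\R^3)}^2+C_S^6\|\nabla u\|_{L^2(\R^3)}^6,
\]
and Theorem \ref{thm:main} then yields all four estimates \eqref{eq:velocity-final}–\eqref{eq:wm-final}. The only point needing any care, and it is minor, is that the $L^6$ bound should come from the global Sobolev inequality for functions vanishing at infinity, so that $M$ depends only on the Dirichlet norm. Consequently the constant $C=C(M)$ in the conclusions depends only on $\|\nabla u\|_{L^2(\R^3)}$, while $R_0$ may still depend on $u$.
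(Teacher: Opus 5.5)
Your proposal is correct and is the argument the paper has in mind (the Corollary is stated without a written proof): a $D$-solution has $\nabla u\in L^2(\R^3)$ and $u\in L^6(\R^3)$ by \eqref{eq:L6}, so \eqref{eq:annular-assumption} holds with $M$ controlled by these global norms, and Theorem \ref{thm:main} applies. Your extra step, using the homogeneous Sobolev inequality together with $u\to0$ at infinity so that $M$ depends only on $\|\nabla u\|_{L^2(\R^3)}$, is a valid minor sharpening.
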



We next consider general $D$-solutions without any symmetry assumption. Using the fixed splitting in \eqref{eq:splitting}, define the radial envelope
\begin{equation}
	H(r):=\sup_{|x'|=r,\,z\in\R}|u(x',z)|.
	\label{eq:H}
\end{equation}
The first step is the following Liouville criterion.

\begin{Prop}
	\label{thm:envelope}
	Let $u$ be a smooth solution of \eqref{eq:NS} satisfying \eqref{eq:Dsolution}. If
	\begin{equation}
		\int_1^\infty rH(r)^3\,dr<\infty,
		\label{eq:envelope-condition}
	\end{equation}
	then $u\equiv0$.
\end{Prop}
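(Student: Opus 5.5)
We test the energy equality against a cutoff depending only on the cylindrical radius $r=|x'|$. Fix $\phi\in C_c^\infty([0,\infty))$ with $\phi=1$ on $[0,1]$, $\phi=0$ on $[2,\infty)$, and set $\phi_R(x)=\phi(|x'|/R)$. The difficulty is that $\phi_R$ is not compactly supported in $z$, so we first insert an auxiliary axial cutoff $\psi_\rho(z)=\psi(z/\rho)$ and later send $\rho\to\infty$ at fixed $R$.

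\textbf{Step 1: the tested identity.} Multiply \eqref{eq:NS} by $u\phi_R\psi_\rho$ and integrate by parts, using $\diver u=0$:
\[
\int|\nabla u|^2\phi_R\psi_\rho
=\frac12\int|u|^2\Delta(\phi_R\psi_\rho)
+\int\Bigl(\tfrac12|u|^2+p\Bigr)u\cdot\nabla(\phi_R\psi_\rho).
\]

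\textbf{Step 2: removing the axial cutoff.} The $\psi_\rho$-derivative terms live on $\{|z|\sim\rho,\ |x'|\le2R\}$ and carry a factor $\rho^{-1}$ or $\rho^{-2}$. We use $u\in L^6$ and choose the pressure normalization $p\in L^3$, which is possible since $p$ is a Calder\'on--Zygmund transform of $u\otimes u$. With these, H\"older bounds each such term by
\[
\rho^{-1}\,|\text{slab}|^{1/2}\cdot(\text{tail of } \|u\|_{L^6}^3 \text{ on the slab}),
\]
where $|\text{slab}|^{1/2}\sim(R^2\rho)^{1/2}$. This gives
\[
\rho^{-1/2}R\cdot o(1)\to0 \quad (\rho\to\infty),
\]
and similarly $\rho^{-2}|\text{slab}|^{2/3}\|u\|_{L^6}^2\sim\rho^{-4/3}R^{4/3}\to0$. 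We are left with the identity for $\phi_R$ alone, with all integrals over $\A_R$. These converge because $|u|\le H(r)$ there, the set $\A_R$ has infinite $z$-extent, and $u\in L^6$. The term $\int_{\A_R}|u|^3$ requires a separate check: it is bounded by $\int_R^{2R}H(r)\,\|u(r,\cdot)\|_{L^2}^2\,r\,dr$, but it is cleaner to use $|u|^3\le H(r)^{3}$ only after pairing with the pressure.

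\textbf{Step 3: estimating the cubic and pressure terms.} Since $|\nabla\phi_R|\lesssim R^{-1}\mathbf 1_{\A_R}$, the cubic term is bounded by
\[
R^{-1}\int_{\A_R}|u|^3\le R^{-1}\int_{\A_R}H\,|u|^2 .
\]
The main obstacle is the pressure term $R^{-1}\int_{\A_R}|p||u|$, together with making $\int_{\A_R}|u|^2$ finite, since $u$ is only in $L^6$ along the infinite $z$-direction.

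I would avoid $|u|^2$ integrals by rewriting the velocity flux: because $\diver u=0$ and $\phi_R$ depends only on $x'$, we have $\int u\cdot\nabla\phi_R\,g=\int u\cdot\nabla\phi_R\,(g-\bar g)$ for functions $g$ independent of $z$. That alone does not suffice, however. The intended route is instead to exploit that the condition \eqref{eq:envelope-condition} forces integrability after averaging over dyadic $R$. Estimate
\[
\int_{\A_R}|u|^3\,dx\le\int_R^{2R}H(r)^{3-6/\cdot}\cdots,
\]
and more concretely use
\[
\int_{\A_R}|u|^3\le \Bigl(\int_{\A_R}|u|^6\Bigr)^{1/2}\Bigl(\int_{\A_R}|u|^0\Bigr)^{1/2},
\]
which fails because the measure is infinite. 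Hence I would rather interpolate as $|u|^3\le H^{3}$ \emph{per unit $z$-length} and control the $z$-extent by $u\in L^6$.

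\textbf{Step 4: averaging over $R$.} For the pressure, I would use the local decomposition $p=\mathcal R_i\mathcal R_j(u_iu_j)$ and bound $\|p\|_{L^{3/2}(\A_R)}$ via $\|u\|^2_{L^3}$ on an enlarged annulus, plus a harmonic remainder that is controlled by Calder\'on--Zygmund theory. Finally, average the identity over $R\in[R_k,2R_k]$ and use \eqref{eq:envelope-condition} in the form
\[
\int_{R}^{4R}rH^3\,dr\to0,
\]
which gives $\int\phi_R|\nabla u|^2\to0$, hence $\nabla u\equiv0$ and $u\equiv0$ by the boundary condition.

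I expect the genuine difficulty to be showing that the surface-type quantity $\int_{\A_R}|u|^3R^{-1}$ is controlled by $\int_R^{2R}rH^3\,dr$. This requires a bound on the $z$-measure where $|u|$ is comparable to $H$, and I would try to obtain it by combining the $L^6$ tail with $D$-solution decay in $z$.
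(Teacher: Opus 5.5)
There is a genuine gap, and it sits exactly where you place "the genuine difficulty". With the cylindrical cutoff $\phi(|x'|/R)$, the key estimate is not merely unproved; it is out of reach. The region $\mathcal A_R$ has infinite $z$-extent, so the hypotheses give no control of $R^{-1}\int_{\mathcal A_R}|u|^3$, of $R^{-2}\int_{\mathcal A_R}|u|^2$, or of the pressure flux. The bound $|u|\le H(r)$ is a supremum in $z$, and $u\in L^6$ gives no $L^2$ or $L^3$ integrability on a set of infinite measure: $L^3$ there is not controlled by $L^6$ and $L^\infty$. So your justification at the end of Step 2 that "these converge" is not valid. After $\rho\to\infty$ you are left at best with conditionally convergent integrals, which are useless for absolute-value estimates.

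Any bound of the form $\int|u|^3\lesssim R\int rH^3\,dr$ requires a $z$-length of order $R$, and your cutoff does not supply one. Step 4 has the same problem. Bounding $\|p\|_{L^{3/2}(\mathcal A_R)}$ by $\|u\|_{L^3}^2$ on an enlarged annulus is unjustified, because $\mathcal R_i\mathcal R_j$ is nonlocal and the "harmonic remainder" is uncontrolled on an unbounded region.

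The missing idea is to use a compactly supported spherical cutoff $\eta_R\in C_c^\infty(B_{2R})$. You then remove the pressure with a Bogovskii correction $w_R$ on the spherical annulus $A_R=B_{2R}\setminus\overline{B_R}$, chosen so that $\diver w_R=u\cdot\nabla\eta_R$. Testing with the divergence-free field $\eta_Ru-w_R$ gives
\[
\int_{B_R}|\nabla u|^2\le C\int_{A_R}|\nabla u|^2+\frac C{R^2}\int_{A_R}|u|^2+\frac CR\int_{A_R}|u|^3 .
\]
Now the $z$-extent of $A_R$ is at most $4R$, which exactly cancels the factor $R^{-1}$. Split $A_R$ at a fixed radius $r_*$:
\begin{itemize}
\item On $\{r>r_*\}$, the cubic term contributes at most $C\int_{r_*}^{2R}rH(r)^3\,dr$.
\item On $\{r\le r_*\}$, one has $|z|\ge\sqrt{R^2-r_*^2}$, so $\sup|u|\to0$ there, and this piece has volume only $O(r_*^2R)$.
\item The $|u|^2$ term is handled by H\"older against the same cubic flux, giving $O(R^{-1/3})$.
\end{itemize}
Letting $R\to\infty$ and then $r_*\to\infty$ yields $\nabla u\equiv0$, hence $u\equiv0$.

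This also shows why your Step 4 target $\int_R^{4R}rH^3\,dr\to0$ is not the right form. With a compact cutoff the cubic term sees all radii $0\le r\le 2R$. The two-parameter limit in $(R,r_*)$ is what replaces your dyadic averaging.
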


Proposition \ref{thm:envelope} immediately gives the following.

\begin{Thm}
	\label{thm:velocity}
	Let $u$ be a smooth solution of \eqref{eq:NS} satisfying \eqref{eq:Dsolution}. Suppose that, for some $\gamma>\frac13$,
	\begin{equation}
		\sup_{{|x'|=r ,\, z\in\R}}|u(x',z)|\leq \frac{C}{r^{2/3}[\log(\ee+r)]^\gamma},
		\qquad r\geq1.
		\label{eq:velocity-endpoint}
	\end{equation}
	Then $u\equiv0$.
\end{Thm}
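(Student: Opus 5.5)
Theorem \ref{thm:velocity} follows directly from Proposition \ref{thm:envelope}, so all we need is to check that the decay hypothesis \eqref{eq:velocity-endpoint} gives the integrability condition \eqref{eq:envelope-condition}. By the definition \eqref{eq:H} of the radial envelope, hypothesis \eqref{eq:velocity-endpoint} reads
\[
H(r)\le C r^{-2/3}[\Lr r]^{-\gamma},\qquad r\ge1 .
\]
Hence
\[
rH(r)^3\le C^3 r^{-1}[\Lr r]^{-3\gamma},\qquad r\ge 1 .
\]

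Next we test integrability at infinity. Since $\log(\ee+r)\ge\log r$ for $r\ge1$, we may substitute $s=\log(\ee+r)$ on $[1,\infty)$. Then $ds=\frac{dr}{\ee+r}$ and $\frac1r\le \frac{1+\ee}{\ee+r}$ for $r\ge1$. This gives
\[
\int_1^\infty rH(r)^3\,dr\le C^3(1+\ee)\int_{\log(\ee+1)}^\infty s^{-3\gamma}\,ds .
\]
The right-hand side is finite precisely because $3\gamma>1$, that is, $\gamma>\frac13$. This is where the threshold in the theorem comes from: at $\gamma=\frac13$ we would get a divergent $\int ds/s$.

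The integral $\int_1^\infty rH(r)^3\,dr$ is a well-defined element of $[0,\infty]$: $H$ is a supremum of continuous functions, hence lower semicontinuous and therefore measurable. Moreover, $H(r)<\infty$ for $r\ge1$ by the hypothesis. So \eqref{eq:envelope-condition} holds. Since $u$ is a smooth solution of \eqref{eq:NS} satisfying \eqref{eq:Dsolution}, Proposition \ref{thm:envelope} gives $u\equiv0$.

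None of these steps is a real obstacle. All the analytical difficulty sits in Proposition \ref{thm:envelope}, which we take as given.
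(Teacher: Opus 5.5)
Your proposal is correct and matches the paper's proof: you bound $rH(r)^3\le C r^{-1}[\log(\ee+r)]^{-3\gamma}$, observe that this is integrable on $[1,\infty)$ exactly when $3\gamma>1$, and then invoke Proposition \ref{thm:envelope}. Your substitution $s=\log(\ee+r)$ and the measurability remark on $H$ only make explicit what the paper leaves implicit.
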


The corresponding vorticity criterion is our second main result.

\begin{Thm}
	\label{thm:vorticity}
	Let $u$ be a smooth solution of \eqref{eq:NS} satisfying \eqref{eq:Dsolution}, and let $\omega=\curl u$. Suppose that, for some $\gamma>\frac13$,
	\begin{equation}
		\sup_{{|x'|=r,\, z\in\R}}|\omega(x',z)|\leq \frac{C}{r^{5/3}[\log(\ee+r)]^\gamma},
		\qquad r\geq1.
		\label{eq:vorticity-endpoint}
	\end{equation}
	Then $u\equiv0$.
\end{Thm}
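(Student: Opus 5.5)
The plan is to reduce Theorem \ref{thm:vorticity} to Proposition \ref{thm:envelope}. I will show that the vorticity bound \eqref{eq:vorticity-endpoint} forces the velocity envelope to satisfy
\[
H(r)\leq C r^{-2/3}[\Lr r]^{-\gamma}\qquad (r\geq R_1)
\]
for some $R_1\geq1$. Then
\[
\int_{R_1}^\infty rH(r)^3\,dr\lesssim\int_{R_1}^\infty \frac{dr}{r[\Lr r]^{3\gamma}}<\infty
\]
precisely because $3\gamma>1$. On $[1,R_1]$ the integrand is bounded, since $u$ is smooth and tends to $0$ at infinity, hence bounded. Proposition \ref{thm:envelope} then gives $u\equiv0$.

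\textbf{Step 1 (Biot--Savart representation).} Since $\omega\in L^2(\R^3)$, define $v=\nabla\times(-\Delta)^{-1}\omega$, that is,
\[
v(x)=\frac{1}{4\pi}\int_{\R^3}\frac{(x-y)\times\omega(y)}{|x-y|^3}\,dy .
\]
By Hardy--Littlewood--Sobolev, the kernel $|x-y|^{-2}$ maps $L^2$ to $L^6$, so $v\in L^6$. Moreover $\diver v=0$ and $\curl v=\omega$. Hence $u-v$ is divergence free and curl free, so each component is harmonic, and it lies in $L^6$ by \eqref{eq:L6}. By Liouville's theorem for harmonic functions, $u=v$. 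Consequently
\[
|u(x)|\leq C\int_{\R^3}\frac{|\omega(y)|}{|x-y|^2}\,dy .
\]

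\textbf{Step 2 (pointwise envelope bound).} Fix $x=(x',z)$ with $|x'|=r$ large, and set $g(s)=s^{-5/3}[\Lr s]^{-\gamma}$, so that $|\omega(y)|\leq Cg(|y'|)$ whenever $|y'|\geq1$. I split the $y$-integral into four regions.

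For the \emph{core} $|y'|<1$, I use Cauchy--Schwarz with $\|\omega\|_{L^2}$:
\[
\Bigl(\int_{|y'|<1}|x-y|^{-4}\,dy\Bigr)^{1/2}\lesssim\Bigl(\int_\R (r^2+t^2)^{-2}\,dt\Bigr)^{1/2}\lesssim r^{-3/2}.
\]

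For the remaining regions I first integrate in $y_3$, using $\int_\R (|x'-y'|^2+t^2)^{-1}\,dt=\pi|x'-y'|^{-1}$. This reduces the problem to estimating $\int g(|y'|)\,|x'-y'|^{-1}\,dy'$ over planar regions.

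\begin{itemize}
\item On $1\leq|y'|\leq r/2$ we have $|x'-y'|\sim r$, and the contribution is $\lesssim r^{-1}\int_1^{r}s^{-2/3}[\Lr s]^{-\gamma}\,ds\lesssim r^{-2/3}[\Lr r]^{-\gamma}$. The integral is dominated by its upper endpoint, so the logarithmic gain is not lost.
\item On $r/2\leq|y'|\leq 2r$ we have $g\sim r^{-5/3}[\Lr r]^{-\gamma}$ and $\int_{|x'-y'|\leq3r}|x'-y'|^{-1}\,dy'\sim r$, which again gives $r^{-2/3}[\Lr r]^{-\gamma}$.
\item On $|y'|\geq2r$ we have $|x'-y'|\sim|y'|$, and the contribution is $\int_{2r}^\infty s^{-5/3}[\Lr s]^{-\gamma}\,ds\lesssim r^{-2/3}[\Lr r]^{-\gamma}$.
\end{itemize}

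All bounds are uniform in $z$, so $H(r)\lesssim r^{-2/3}[\Lr r]^{-\gamma}$ for large $r$. Step 2 also shows that the Biot--Savart integral converges absolutely at every point.

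\textbf{Main obstacle.} I expect the main obstacle to be Step 1 and the core cylinder. The hypothesis \eqref{eq:vorticity-endpoint} gives no pointwise information near the axis on an unbounded set of $z$, and the representation must be justified without any decay assumption along the axis. Both difficulties are handled by the global bound $\omega\in L^2$: it feeds the HLS argument that identifies $u$ with $v$, and the Cauchy--Schwarz estimate in the core, whose $r^{-3/2}$ bound is harmless. The remaining task is the bookkeeping showing that every region yields exactly $r^{-2/3}[\Lr r]^{-\gamma}$, which is what makes the endpoint condition $\gamma>1/3$ match \eqref{eq:envelope-condition}.
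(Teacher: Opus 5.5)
Your proposal is correct and follows the paper's proof essentially step for step. You use the $L^6$ identification of $u$ with the Biot--Savart integral (Lemma \ref{lem:Biot-Savart}(i)) and the Cauchy--Schwarz bound $r^{-3/2}$ on the core $|y'|<1$ via $\omega\in L^2$. You then integrate in $y_3$ to reduce to the planar potential of Lemma \ref{lem:Riesz}, where your middle annulus simply merges $E_2\cup E_3$, and finish with $\gamma>\frac13$ in Proposition \ref{thm:envelope}. The only slip is the sign of your explicit kernel: $\curl(-\Delta)^{-1}\omega=\frac1{4\pi}\int\omega(y)\times(x-y)|x-y|^{-3}\,dy$, not $(x-y)\times\omega(y)$, but this does not affect the modulus bound you actually use.
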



\begin{Rem}
	Theorem \ref{thm:vorticity} should be compared with the cylindrical criteria of Seregin \cite[Corollary 1.9]{S2018}, Wang \cite[Theorem 1.3]{W2019} and Zhao \cite[Theorem 1.2]{Z2019}. Relative to these results, the strict algebraic condition \(\beta>\frac53\) is replaced by the critical exponent \(\beta=\frac53\) together with a logarithmic gain, and no axisymmetry assumption is imposed. The hypothesis is a pointwise bound on the full vorticity, uniform on the cylinders $\{(x',z)\in\mathbb R^2\times\mathbb R:|x'|=r\}$. In particular, it requires no additional decay as \(|z|\to\infty\) with \(r\) fixed. This differs both from isotropic vorticity conditions formulated in terms of the full distance \(|x|\), such as those in \cite{KTW2017}, and from the anisotropic velocity-integrability criteria in \cite{C2023,ZZ2025}. It is also distinct from the head-pressure criteria in \cite{C2026}. 
\end{Rem}

\begin{Rem}
	The logarithmic factor in \eqref{eq:vorticity-endpoint} can be relaxed to a finite hierarchy of iterated logarithms. 
\end{Rem}

The proof of Theorem \ref{thm:vorticity} has two steps. First, after integrating the three-dimensional Biot--Savart kernel in the axial variable, a two-dimensional logarithmically weighted Riesz-potential estimate transfers the decay of $\omega$ to
	\[
	H(r)\lesssim r^{-2/3}[\log(\ee+r)]^{-\gamma}.
	\]
Second, a Bogovskii correction gives a pressure-free Caccioppoli inequality. The condition $\gamma>\frac13$ is exactly what makes the cubic radial flux integrable:
	\[
	\int_1^\infty rH(r)^3\,dr<\infty.
	\]

\subsection{Notation and organization}
\label{subsec:notation}

For $R>0$ and $x_0\in\R^3$, denote
\begin{equation}
	B_R(x_0):=\{x\in\R^3:|x-x_0|<R\},
	\qquad
	B_R:=B_R(0),
	\qquad
	A_R:=B_{2R}\setminus\overline{B_R}.
	\label{eq:geometric-sets}
\end{equation}
Thus $A_R$ is a spherical annulus, while $\A_R$ in \eqref{eq:cylindrical-annulus} is a cylindrical annulus about the fixed axis in \eqref{eq:splitting}. For the axisymmetric solutions, we use cylindrical coordinates
\[
x_1=r\cos\theta,\qquad x_2=r\sin\theta,\qquad z=x_3,
\]
with $e_r=(\cos\theta,\sin\theta,0)$, $e_\theta=(-\sin\theta,\cos\theta,0)$ and $e_z=(0,0,1)$. An axisymmetric velocity field and its vorticity are written as
\[
u=u_r(r,z)e_r+u_\theta(r,z)e_\theta+u_z(r,z)e_z,
\qquad
\omega=\omega_r e_r+\omega_\theta e_\theta+\omega_z e_z,
\]
where $\omega_r=-\partial_z u_\theta$, $\omega_\theta=\partial_z u_r-\partial_r u_z$ and $\omega_z= r^{-1}\partial_r(ru_\theta)$. For a scalar cylindrical component $f$, we write
\[
\nabla_{r,z}f=(\partial_rf,\partial_zf),
\qquad
\Delta_{r,z}f=\partial_r^2f+\partial_z^2f.
\]
For notational simplicity, $|\nabla f|$ denotes $|\nabla_{r,z}f|$ whenever $f$ is a cylindrical scalar component. At the axis, the smoothness of an azimuthal component $f$ is understood through the associated vector field $f e_\theta$.

For $R>0$ and $z_0\in\R$, define the nested cylinders
\begin{align*}
	\Q_R(z_0)&:=\left\{x=(x',z):\frac R2<|x'|<\frac{3R}{2},\ |z-z_0|<R\right\},\\
	\Q_R'(z_0)&:=\left\{x=(x',z):\frac{3R}{4}<|x'|<\frac{5R}{4},\ |z-z_0|<\frac R2\right\},\alabel{eq:nested-cylinders}\\
	\Q_R''(z_0)&:=\left\{x=(x',z):\frac{7R}{8}<|x'|<\frac{9R}{8},\ |z-z_0|<\frac R4\right\}.
\end{align*}
Thus $\overline{\Q_R''(z_0)}\subset\Q_R'(z_0)$ and $\overline{\Q_R'(z_0)}\subset\Q_R(z_0)$, with separation distances comparable with $R$. Under the centered scaling $\widetilde x'=x'/R$ and $\widetilde z=(z-z_0)/R$, these cylinders become fixed cylinders $\widetilde\Q_1$, $\widetilde\Q_2$, and $\widetilde\Q_3$. Their meridional sections are
\begin{equation}
	\widetilde D_1:=\left(\frac12,\frac32\right)\times(-1,1),
	\qquad
	\widetilde D_2:=\left(\frac34,\frac54\right)\times\left(-\frac12,\frac12\right),
	\qquad
	\widetilde D_3:=\left(\frac78,\frac98\right)\times\left(-\frac14,\frac14\right),
	\label{eq:fixed-meridional-domains}
\end{equation}
and $\widetilde\Q_j:=\{(\widetilde x',\widetilde z):( |\widetilde x'|,\widetilde z)\in\widetilde D_j\}$ for $j=1,2,3$. In particular, $\widetilde D_3\Subset\widetilde D_2\Subset\widetilde D_1$.

Throughout the paper, $C$ denotes a positive constant whose value may change from line to line. Any relevant dependence will be indicated by subscripts or arguments, e.g., $C_\alpha$ or $C(\alpha,\beta)$. We write $X\lesssim Y$ if $X\leq CY$, and $X\sim Y$ if both $X\lesssim Y$ and $Y\lesssim X$.

The paper is organized as follows. Section \ref{sec:preliminaries} collects the main tools used in the paper. Section \ref{sec:axisymmetric-refinement} proves the new Calderón--Zygmund and annular vorticity estimates, and then proves Theorem \ref{thm:main}. Section \ref{sec:liouville-endpoint} establishes the pressure-free Caccioppoli inequality and the logarithmic vorticity-to-velocity decay transfer, and then proves Theorem \ref{thm:vorticity}. Some auxiliary estimates are placed in Appendix \ref{sec:appendix}.

	\section{Preliminaries and key estimates}
	\label{sec:preliminaries}

	\subsection{Known axisymmetric decay estimates}
	
	From \eqref{eq:NS}, the axisymmetric steady equations are
	\begin{equation*}
		\left\{
		\begin{aligned}
			(u_r\partial_r+u_z\partial_z)u_r-\frac{u_\theta^2}{r}+\partial_rp&=\left(\partial_r^2+\frac1r\partial_r+\partial_z^2-\frac1{r^2}\right)u_r,\\
			(u_r\partial_r+u_z\partial_z)u_\theta+\frac{u_ru_\theta}{r}&=\left(\partial_r^2+\frac1r\partial_r+\partial_z^2-\frac1{r^2}\right)u_\theta,\\
			(u_r\partial_r+u_z\partial_z)u_z+\partial_zp&=\left(\partial_r^2+\frac1r\partial_r+\partial_z^2\right)u_z,\\
			\partial_ru_r+\frac{u_r}{r}+\partial_zu_z&=0.
		\end{aligned}
		\right.
	\end{equation*}
	The corresponding vorticity equations are
	\begin{equation}
		\left\{
		\begin{aligned}
			(u_r\partial_r+u_z\partial_z)\omega_r-(\omega_r\partial_r+\omega_z\partial_z)u_r&=\left(\partial_r^2+\frac1r\partial_r+\partial_z^2-\frac1{r^2}\right)\omega_r,\\
			(u_r\partial_r+u_z\partial_z)\omega_\theta-\frac{u_r}{r}\omega_\theta-\frac1r\partial_z(u_\theta^2)&=\left(\partial_r^2+\frac1r\partial_r+\partial_z^2-\frac1{r^2}\right)\omega_\theta,\\
			(u_r\partial_r+u_z\partial_z)\omega_z-(\omega_r\partial_r+\omega_z\partial_z)u_z&=\left(\partial_r^2+\frac1r\partial_r+\partial_z^2\right)\omega_z.
		\end{aligned}
		\right.
		\label{eq:vorticity-equations}
	\end{equation}
	
	We will use the following uniform estimate. Fix once and for all an exponent $q_*>3$.
	
	\begin{Lem}
		\label{lem:uniform-regularity}
		Let $(u,p)$ be a smooth solution of \eqref{eq:NS}, and set $U:=\|u\|_{L^\infty(\R^3)}$. Then $U<\infty$. Moreover, there exists a constant $C_{\mathrm{reg}}=C(q_*,U)<\infty$, such that
		\begin{equation*}
			\sup_{x_0\in\R^3}\left(\|u\|_{W^{4,q_*}(B_1(x_0))}+\|\nabla p\|_{W^{2,q_*}(B_1(x_0))}\right)\leq C_{\mathrm{reg}},
		\end{equation*}
		and
		\begin{equation*}
			\sup_{x\in\R^3}\left(\sum_{j=0}^{3}|\nabla^j u(x)|+\sum_{j=1}^{2}|\nabla^j p(x)| + \sum_{j=0}^{2}|\nabla^j \omega(x)|\right)\leq C_{\mathrm{reg}}.
		\end{equation*}
	\end{Lem}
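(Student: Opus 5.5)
The plan is to first get the global bound $U<\infty$ from continuity and the decay condition, and then to run a translation-invariant interior bootstrap for the Stokes system in which the only datum is $U$.

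\emph{Step 1: $U<\infty$.} The boundary condition $\lim_{|x|\to\infty}u(x)=0$ in \eqref{eq:NS} gives some $R_1$ with $|u|\leq1$ outside $B_{R_1}$. On the compact set $\overline{B_{R_1}}$ the smooth field $u$ is bounded. Hence $U<\infty$.

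\emph{Step 2: first derivatives, with the pressure eliminated.} Fix $x_0\in\R^3$ and write the system on $B_4(x_0)$ as the Stokes system
\[
-\Delta u+\nabla p=-\diver(u\otimes u),\qquad \diver u=0,
\]
whose right-hand side is the divergence of a tensor bounded by $U^2$. I would use the local interior estimate for the Stokes system in divergence form, in which the pressure enters only through $p-(p)_{B}$ and is therefore controlled by the data. This is the Cattabriga--Solonnikov/Seregin local $L^q$ estimate:
\[
\|\nabla u\|_{L^{q_*}(B_3(x_0))}+\|p-(p)_{B_3(x_0)}\|_{L^{q_*}(B_3(x_0))}\leq C\bigl(\|u\otimes u\|_{L^{q_*}(B_4(x_0))}+\|u\|_{L^{q_*}(B_4(x_0))}\bigr)\leq C(q_*,U).
\]
If one prefers to avoid the pressure entirely, the same bound can be obtained differently. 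Take the curl to get $-\Delta\omega=-\curl\diver(u\otimes u)$, use the interior Calderón--Zygmund estimate for $\omega$, and recover $\nabla u$ from $\omega$ and $u$ through $-\Delta u=\curl\omega$ with local elliptic estimates. This first step is the main obstacle: the pressure has no a priori normalization or integrability, so the estimate must be one that only sees $\nabla p$ or the mean-free part of $p$.

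\emph{Step 3: bootstrap.} Once $u\in W^{1,q_*}(B_3(x_0))$ with $q_*>3$, Morrey's embedding keeps $u$ bounded. Then $(u\cdot\nabla)u\in L^{q_*}(B_3(x_0))$, and the interior Stokes estimate with nondivergence forcing gives
\[
\|u\|_{W^{2,q_*}}+\|\nabla p\|_{L^{q_*}}\leq C(q_*,U)\quad\text{on }B_{5/2}(x_0).
\]
Since $W^{k,q_*}$ is an algebra for $q_*>3$, the forcing $(u\cdot\nabla)u$ then lies in $W^{1,q_*}$ and afterwards in $W^{2,q_*}$ on slightly smaller balls. Repeating the Stokes estimate twice more, down to $B_1(x_0)$, yields
\[
\|u\|_{W^{4,q_*}(B_1(x_0))}+\|\nabla p\|_{W^{2,q_*}(B_1(x_0))}\leq C(q_*,U).
\]
All constants come from fixed balls, so they are independent of $x_0$. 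Taking the supremum over $x_0$ gives the first assertion.

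\emph{Step 4: pointwise bounds.} Apply Morrey's embedding $W^{1,q_*}(B_1)\hookrightarrow C^0(\overline{B_1})$, valid since $q_*>3$, to $\nabla^ju$ for $j\leq3$ and to $\nabla^jp$ for $j=1,2$. Since $\nabla^j\omega$ is a linear combination of entries of $\nabla^{j+1}u$, the bounds for $\nabla^j\omega$ with $j\leq2$ follow from those for $\nabla^{j+1}u$. This gives the second assertion with a constant $C_{\mathrm{reg}}=C(q_*,U)$.
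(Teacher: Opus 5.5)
Your proposal is correct and follows the paper's route. The paper only cites standard interior estimates for the stationary Stokes system (\cite[Chapter~IV]{G2011}). You supply the details it leaves to that reference: $U<\infty$ from continuity and decay, a first step that controls only $\nabla u$ and the mean-free part of $p$, a $W^{k,q_*}$ bootstrap on shrinking balls of fixed size, and Morrey's embedding.
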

	
	The proof of this lemma relies only on standard interior estimates for the stationary Stokes system; see \cite[Chapter~IV]{G2011}.
	We will also use the following estimates from Carrillo--Pan--Zhang \cite{CPZ2020}.
	
	\begin{Prop}[see Theorem 1.1 and formula (3.18) in \cite{CPZ2020}] \label{prop:CPZ}
		Under the assumptions of Theorem \ref{thm:main}, there exist $R_*=R_*(M)\geq2$ and $A_*=A_*(M)>0$ such that, uniformly in $z\in\R$ and for $r\geq R_*$,
		\begin{equation}
			|u(r,z)|\leq A_*r^{-1/2}[\Lr r]^{1/2},
			\label{eq:known-u}
		\end{equation}
		\begin{equation*}
			|\omega_\theta(r,z)|+|\nabla\omega_\theta(r,z)|\leq A_*r^{-5/4}[\Lr r]^{3/4},
		\end{equation*}
		and
		\begin{equation*}
			|\omega_r(r,z)|+|\omega_z(r,z)|\leq A_* r^{-1}\Lr r.
		\end{equation*}
		Moreover,
		\begin{equation}
			B_*:=\sup_{0\leq r\leq R_*,\ z\in\R}|\omega_\theta(r,z)|<\infty,
			\label{eq:wtheta-core-bound}
		\end{equation}
	\end{Prop}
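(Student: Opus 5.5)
The plan is to reduce the proposition to two ingredients already available: the decay theorem of Carrillo--Pan--Zhang, and the uniform regularity of Lemma \ref{lem:uniform-regularity}. The three decay bounds are exactly Theorem 1.1 and formula (3.18) of \cite{CPZ2020}. So the substantive task is to check that the hypotheses used there are implied by \eqref{eq:annular-assumption}, and that the constants depend only on $M$. The finiteness of $B_*$ is a separate and elementary observation.

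For the decay bounds, I would recall the structure of the argument in \cite{CPZ2020} and check that it only ever uses annular quantities.
\begin{enumerate}
\item Fix $r_0=R\geq R_*$ and $z_0\in\R$, and rescale $\Q_R(z_0)$ to the fixed cylinder $\widetilde\Q_1$ via $\widetilde u(\widetilde x)=Ru(R\widetilde x',z_0+R\widetilde z)$. On $\widetilde\Q_1$ the rescaled equations are uniformly parabolic-free elliptic systems in the meridional variables, with bounded coefficients $1/\widetilde r$ since $\widetilde r\in(\tfrac12,\tfrac32)$.
\item The quantity $\int_{\Q_R(z_0)}|\nabla u|^2+|u|^6$ is bounded by the annular integral over $\A_{R/2}\cup\A_R$, hence by $2M$.
\item Rotational packing is the key point. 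Since $u$ is axisymmetric, the cylinder $\Q_R(z_0)$ contains on the order of $R$ disjoint rotated copies of a ball of radius $\sim 1$. This yields the improved local bound $\|\nabla \widetilde u\|_{L^2}\lesssim R^{-1/2}$ on unit meridional pieces.
\item A Brezis--Gallouet (log-Sobolev) interpolation then gives $|u|\lesssim r^{-1/2}(\log r)^{1/2}$.
\item Feeding this into the $\omega_\theta$ equation in \eqref{eq:vorticity-equations} and performing a localized energy estimate with cut-offs adapted to $\Q_R'\subset\Q_R$ yields the $r^{-5/4}(\log r)^{3/4}$ bound for $\omega_\theta$ and $\nabla\omega_\theta$, with $\nabla\omega_\theta$ handled by one more differentiation and Lemma \ref{lem:uniform-regularity}.
\item The bound for $\omega_r,\omega_z$ comes from the corresponding estimate for $u_\theta$: it suffices to control $\nabla(ru_\theta)/r$ via the $u_\theta$ equation and the same log interpolation.
\end{enumerate}
Since only the scale-invariant annular quantity enters, all constants depend on $M$ alone. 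The threshold $R_*$ is chosen so that $\log r\geq1$ and the cylinders stay away from the axis.

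For \eqref{eq:wtheta-core-bound}, I would argue directly. By Lemma \ref{lem:uniform-regularity}, $\sup_{\R^3}|\omega|\leq C_{\mathrm{reg}}<\infty$, because $u$ is bounded and smooth with uniformly bounded derivatives. Since $|\omega_\theta|\leq|\omega|$ pointwise (away from the axis, with $\omega_\theta e_\theta$ smooth up to the axis), this gives $B_*\leq C_{\mathrm{reg}}$. The main obstacle is only bookkeeping: verifying that the proof in \cite{CPZ2020}, stated for $D$-solutions, uses the Dirichlet energy solely through annular integrals. The precise location where this happens is the packing step above.
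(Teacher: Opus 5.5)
Your proposal matches the paper, which gives no separate proof: it quotes the three decay bounds from Theorem~1.1 and formula~(3.18) of \cite{CPZ2020} under the annular hypothesis \eqref{eq:annular-assumption}, and $B_*<\infty$ follows, as you say, from the uniform bound on $\omega$ in Lemma~\ref{lem:uniform-regularity}. One small slip in your sketch: with the scaling $\widetilde u=Ru(R\,\cdot)$ of your step~1 the rescaled meridional Dirichlet norm is $O(R^{1/2})$, not $O(R^{-1/2})$ (the latter holds for $u(R\,\cdot)$), although either normalization gives the same rate $r^{-1/2}(\log r)^{1/2}$ after Brezis--Gallouet.
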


	We next turn to the representation formulas that connect vorticity to velocity.

	\subsection{Representation formulas and singular-integral tools}

	In the following, we will use matrix-valued kernels $K\in C^1(\R^3\setminus\{0\})$ satisfying
	\begin{equation}
		|K(x)|\leq C_K|x|^{-3},
		\qquad x\neq0,
		\label{eq:CZ-size}
	\end{equation}
	and
	\begin{equation}
		\int_{\rho_1<|x|<\rho_2}K(x)\,dx=0
		\label{eq:CZ-cancellation}
	\end{equation}
	for every $0<\rho_1<\rho_2<\infty$. Every homogeneous Calder\'on--Zygmund kernel of degree$-3$ with zero spherical average has these properties. The arguments below use only \eqref{eq:CZ-size} and \eqref{eq:CZ-cancellation}.

	\begin{Lem}
		\label{lem:Biot-Savart}
		The following two forms of the Biot--Savart representation hold.

		\textup{(i)} Let $u$ be a smooth $D$-solution of \eqref{eq:NS}, and let $\omega=\curl u$. Then, in the sense of distributions,
		\begin{equation}
			u(x)=\frac1{4\pi}\int_{\R^3}\frac{\omega(y)\times(x-y)}{|x-y|^3}\,dy.
			\label{eq:Biot-Savart}
		\end{equation}
		Whenever the integral is absolutely convergent, it agrees with the smooth representative of $u$ and satisfies
		\begin{equation}
			|u(x)|\leq C\int_{\R^3}\frac{|\omega(y)|}{|x-y|^2}\,dy.
			\label{eq:Biot-Savart-bound}
		\end{equation}

		\textup{(ii)} Assume that $b\in C^2(\R^3;\R^3)$ is bounded, $b(x)\to0$ as $|x|\to\infty$, $\diver b=0$, and $F=\curl b\in C^1(\R^3;\R^3)$ is bounded. Suppose that, for some $1<a<2$, $\beta\geq0$, $R_*\geq2$, and $A>0$,
		\begin{equation}
			|F(x',z)|\leq A|x'|^{-a}[\log(\ee+|x'|)]^\beta,
			\qquad |x'|\geq R_*,\ z\in\R.
			\label{eq:F-anisotropic-decay}
		\end{equation}
		Set
		\[
		B_F:=\sup_{|x'|\leq R_*,\ z\in\R}|F(x',z)|<\infty.
		\]
		Then
		\begin{equation}
			b(x)=\frac1{4\pi}\int_{\R^3}\frac{F(y)\times(x-y)}{|x-y|^3}\,dy,
			\label{eq:Biot-Savart-b}
		\end{equation}
		the integral is absolutely convergent, and
		\begin{equation}
			|b(x',z)|\leq CA(1+|x'|)^{1-a}[\log(\ee+|x'|)]^\beta+CB_F\frac{R_*^2}{R_*+|x'|}.
			\label{eq:Biot-Savart-b-bound}
		\end{equation}
	\end{Lem}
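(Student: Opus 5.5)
For part (i) we use a cutoff decomposition of $u$ together with the uniqueness of vanishing harmonic fields. Since $u$ is a $D$-solution, $\omega\in L^2(\R^3)$ and $u\in L^6(\R^3)$ by \eqref{eq:L6}. Define
\[
v:=\frac1{4\pi}\int_{\R^3}\frac{\omega(y)\times(x-y)}{|x-y|^3}\,dy=\curl\,(-\Delta)^{-1}\omega .
\]
By Hardy--Littlewood--Sobolev, $v\in L^6$ and $\nabla v=\nabla\curl(-\Delta)^{-1}\omega\in L^2$, with $\diver v=0$ and $\curl v=\omega$ in distributions. Put $h:=u-v$. Then $\diver h=0$ and $\curl h=0$, so each component of $h$ is harmonic, and $h\in L^6$. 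Liouville's theorem for $L^6$ harmonic functions gives $h\equiv0$. This proves \eqref{eq:Biot-Savart} in the sense of distributions.

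Where the integral converges absolutely, it defines a continuous function equal a.e.\ to the smooth $u$, so the two agree pointwise. The bound \eqref{eq:Biot-Savart-bound} is then immediate from $|\omega\times(x-y)|\le|\omega||x-y|$.

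For part (ii) we first prove absolute convergence with the bound \eqref{eq:Biot-Savart-b-bound}, and only then identify the integral with $b$. Fix $x=(x',z)$. Integrate in the axial variable first, using
\[
\int_\R\frac{dt}{(|x'-y'|^2+t^2)}=\frac{\pi}{|x'-y'|}.
\]
This reduces the problem to the two-dimensional potential
\[
\int_{\R^2}\frac{g(y')}{|x'-y'|}\,dy',\qquad g(y')=\sup_z|F(y',z)|.
\]
Split $g$ into two pieces.

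The core piece is $g\le B_F$ on $\{|y'|\le R_*\}$. It contributes at most $CB_FR_*^2$ when $|x'|\ge2R_*$, since then $|x'-y'|\gtrsim|x'|$. It contributes at most $CB_FR_*$ when $|x'|\le2R_*$. Together these give $CB_F R_*^2/(R_*+|x'|)$.

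The outer piece is $g\le A|y'|^{-a}L(|y'|)^\beta$. Set $\rho=|x'|$ and split the $y'$-plane into three regions.

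In the near region $|y'-x'|<\rho/2$, the weight is comparable to $\rho^{-a}L(\rho)^\beta$, and $\int_{|w|<\rho/2}|w|^{-1}dw\sim\rho$. This region contributes $\rho^{1-a}L(\rho)^\beta$.

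In the inner region $|y'|<\rho/2$, we have $|x'-y'|\sim\rho$. Since $a<2$, $\int_{|y'|<\rho/2}|y'|^{-a}L^\beta\,dy'\lesssim\rho^{2-a}L(\rho)^\beta$, so this region contributes $\rho^{1-a}L(\rho)^\beta$.

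In the far region, $|x'-y'|\gtrsim|y'|$. Since $a>1$, $\int_{|y'|>\rho/2}|y'|^{-1-a}L^\beta\,dy'\lesssim\rho^{1-a}L(\rho)^\beta$.

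For $\rho\lesssim1$ the same three-region computation, with $\rho$ replaced by $1$, gives a bound by a constant. Altogether this yields $(1+|x'|)^{1-a}L^\beta$ and hence \eqref{eq:Biot-Savart-b-bound}.

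Finally, identify the integral with $b$. Let $\tilde b$ denote the integral. The bound just proved shows $\tilde b$ is bounded and tends to $0$ as $|x'|\to\infty$.

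The main obstacle is that $\tilde b$ need not decay as $|z|\to\infty$ with $x'$ fixed. Hence $\tilde b$ does not obviously lie in any $L^p$, and we cannot argue as in part (i).

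The plan is therefore as follows. Justify $\curl\tilde b=F$ and $\diver\tilde b=0$ by the standard mollification and cutoff argument, using the local integrability of $F$ and absolute convergence. Then $h=b-\tilde b$ is harmonic and bounded, because both $b$ and $\tilde b$ are bounded. Liouville's theorem for bounded harmonic functions gives that $h$ is constant.

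The constant is zero: along $x=(x',0)$ with $|x'|\to\infty$ we have $b\to0$ by hypothesis and $\tilde b\to0$ by \eqref{eq:Biot-Savart-b-bound}. This gives \eqref{eq:Biot-Savart-b}.
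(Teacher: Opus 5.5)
Your proposal is correct and follows essentially the same route as the paper: the $L^6$ harmonic-uniqueness argument for (i), and for (ii) axial integration reducing to a two-dimensional potential split into the core $|y'|\le R_*$ and the exterior (which you estimate inline rather than citing Lemma~\ref{lem:potential-estimate}), followed by the bounded-harmonic Liouville theorem, where you helpfully make explicit why the constant is zero. Two small slips need fixing: for $|x'|\ge 2R_*$ the core contribution is $CB_FR_*^2/|x'|$, not $CB_FR_*^2$, and the identity $\curl\tilde b=F$ uses $\diver F=0$ (which follows from $F=\curl b$), so you should state it.
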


	\begin{proof}
		For part \textup{(i)}, since $\omega\in L^2(\R^3)$, the vector field $v:=\curl(-\Delta)^{-1}\omega$, initially defined in the distributional sense, belongs to $L^6(\R^3)$ by the Sobolev inequality. Moreover, $\diver v=0$ and $\curl v=\omega$. Since $u\in L^6(\R^3)$ by \eqref{eq:L6}, the difference $h=u-v$ belongs to $L^6(\R^3)$ and satisfies $\diver h=0$ and $\curl h=0$. Hence,
		\[
		-\Delta h=\curl(\curl h)-\nabla(\diver h)=0.
		\]
		Since an $L^6$ harmonic vector field on $\R^3$ is zero, we have $u=v$. This gives \eqref{eq:Biot-Savart} in distributions. Under absolute convergence, the distributional formula agrees pointwise with the smooth representative, and \eqref{eq:Biot-Savart-bound} follows immediately.

		For part \textup{(ii)}, write $y=(y',s)$ and integrate first in $s$. The local singularity is integrable in three dimensions, and
		\[
		\left|\frac1{4\pi}\int_{\R^3}\frac{F(y)\times(x-y)}{|x-y|^3}\,dy\right|
		\leq C\int_{\R^2}\frac{\sup_{s\in\R}|F(y',s)|}{|x'-y'|}\,dy'.
		\]
		The contribution of $|y'|\leq R_*$ is bounded by $CB_FR_*^2/(R_*+|x'|)$, while \eqref{eq:F-anisotropic-decay} and Lemma \ref{lem:potential-estimate} control the exterior contribution by $CA(1+|x'|)^{1-a}[\log(\ee+|x'|)]^\beta$. This proves absolute convergence and \eqref{eq:Biot-Savart-b-bound}. Denote the resulting vector field by $\mathcal BF$. Since $F=\curl b$, one has $\diver F=0$, and the standard distributional identities give $\diver\mathcal BF=0$ and $\curl\mathcal BF=F$. Hence $b-\mathcal BF$ is a bounded harmonic vector field and therefore a constant. Hence \eqref{eq:Biot-Savart-b} holds.
	\end{proof}

	Part \textup{(ii)} isolates the absolutely convergent representation needed in the axisymmetric argument. Differentiating that formula yields the derivative representation used below.

	\begin{Lem}
		\label{lem:differentiated-Biot-Savart}
		Under the assumptions of Lemma \ref{lem:Biot-Savart} \textup{(ii)}, for every $i,k\in\{1,2,3\}$ there exist constants $A_{ijk}$ and homogeneous Calder\'on--Zygmund kernels $K_{ijk}$ of degree$-3$ such that,
		\begin{equation}
			\partial_kb_i(x)=A_{ijk}F_j(x)+\pv\int_{\R^3}K_{ijk}(x-y)F_j(y)\,dy.
			\label{eq:differentiated-BS}
		\end{equation}
		Moreover, each $K_{ijk}$ satisfies \eqref{eq:CZ-size} and \eqref{eq:CZ-cancellation}.
	\end{Lem}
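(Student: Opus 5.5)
We will obtain \eqref{eq:differentiated-BS} by differentiating the absolutely convergent representation \eqref{eq:Biot-Savart-b} of Lemma \ref{lem:Biot-Savart} \textup{(ii)}. Componentwise,
\[
b_i(x)=\frac1{4\pi}\int_{\R^3}\epsilon_{ijl}F_j(y)\frac{(x-y)_l}{|x-y|^3}\,dy.
\]
This is $b=\curl N*F$ with $N(x)=\frac1{4\pi|x|}$, since $\partial_l N(x)=-\frac{x_l}{4\pi|x|^3}$. The differentiated kernel is
\[
\partial_k\Bigl(\frac{x_l}{|x|^3}\Bigr)=\frac{\delta_{kl}}{|x|^3}-\frac{3x_kx_l}{|x|^5}.
\]
It is $C^\infty$ away from $0$ and homogeneous of degree $-3$. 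Its spherical mean vanishes, because $\int_{S^2}\omega_k\omega_l\,d\sigma=\frac{4\pi}{3}\delta_{kl}$. Therefore
\[
K_{ijk}(x):=\frac{\epsilon_{ijl}}{4\pi}\Bigl(\frac{\delta_{kl}}{|x|^3}-\frac{3x_kx_l}{|x|^5}\Bigr)
\]
satisfies \eqref{eq:CZ-size}, and \eqref{eq:CZ-cancellation} follows by integrating in polar coordinates over $\rho_1<|x|<\rho_2$.

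To justify the differentiation, fix $x$ and $\varepsilon>0$. Choose a cutoff $\chi\in C_c^\infty(B_{2\varepsilon}(x))$ with $\chi=1$ on $B_\varepsilon(x)$, and split $F=\chi F+(1-\chi)F$.

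\emph{Far part.} Let $G(y)=\frac{y_l}{|y|^3}$. The kernel $G(x-y)$ is smooth for $y$ away from $x$. Its $x$-derivative is bounded by $C|x-y|^{-3}$, uniformly for $x$ in a small neighbourhood. This far part is where the real work lies. Under \eqref{eq:F-anisotropic-decay} with $a>1$, the first step is to integrate in the axial variable. This gives
\[
\int_{\R}|x-y|^{-3}\,ds\lesssim|x'-y'|^{-2}.
\]
It then remains to show that
\[
\int_{|x'-y'|\gtrsim\varepsilon}|x'-y'|^{-2}\sup_s|F(y',s)|\,dy'<\infty.
\]
On $|y'|\leq R_*$ this follows from $B_F<\infty$. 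On $|y'|\geq R_*$ it follows from $|y'|^{-a}[\log(\ee+|y'|)]^{\beta}$ with $a>0$, since $|y'|^{-2-a}$ is integrable at infinity in $\R^2$. Points $y$ with $|y'-x'|$ small but $|s-z|\geq\varepsilon$ are handled by the same $s$-integration. Dominated convergence then lets us differentiate under the integral sign in the far part.

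\emph{Near part.} The local piece equals $\partial_l N*(\chi F_j)$, where $\chi F_j\in C^1_c$. The classical second-derivative formula for the Newtonian potential of a compactly supported $C^1$ (hence Hölder) density gives
\[
\partial_k\partial_l(N*g)(x)=\pv\int\partial_k\partial_l N(x-y)g(y)\,dy-\tfrac13\delta_{kl}g(x).
\]
This produces the local term $A_{ijk}F_j(x)$ with $A_{ijk}=-\frac13\epsilon_{ijk}$, up to the sign convention.

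Adding the two parts, the cutoff pieces recombine into the full principal value. Outside $B_\varepsilon$ the kernels of both parts agree, and the full principal-value integral converges at infinity by the same anisotropic integrability bound. This yields \eqref{eq:differentiated-BS}. The main obstacle is the far-field integrability: the conditions $a>1$ and $\beta\geq0$ must be checked to give absolute convergence of the $|x-y|^{-3}$ tail. This is harmless after the axial integration, since even $a>0$ suffices there.
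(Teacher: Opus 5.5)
Your proposal is correct and follows the route the paper indicates: you differentiate the absolutely convergent representation \eqref{eq:Biot-Savart-b}, using the near/far splitting and the classical second-derivative formula for the Newtonian potential that the paper leaves to the cited references, and your axial integration correctly shows that the far-field tail needs only $a>0$. One harmless slip: since $b_i=-\epsilon_{ijl}\,\partial_lN*F_j$, your own formula gives $A_{ijk}=+\frac13\epsilon_{ijk}$ rather than $-\frac13\epsilon_{ijk}$, which does not affect the lemma as stated.
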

	
	This standard argument may be found, for example, in \cite[Section~1.4]{BV2022} and \cite[Chapter~II]{S1993}.

	\subsection{Local analytic estimates}
	
	We first recall the scale-invariant Bogovskii estimate on $A_R$.
	
	\begin{Lem}[see \cite{B1980} or Chapter~III, Theorem~III.3.1 in \cite{G2011}]
		\label{lem:Bogovskii}
		For every \(R>0\), there exists a linear operator \(\mathcal B_R\), whose choice is independent of \(q\), such that, for every \(1<q<\infty\),
		\[
		\mathcal B_R:L^q_0(A_R)\longrightarrow W^{1,q}_0(A_R;\mathbb R^3),
		\qquad
		L^q_0(A_R):=\left\{g\in L^q(A_R):\int_{A_R}g\,dx=0\right\}.
		\]
		Moreover, for every \(g\in L^q_0(A_R)\),
		\[
		\operatorname{div}\mathcal B_R g=g
		\quad\text{in }A_R,
		\]
		and
		\[
		\|\nabla\mathcal B_R g\|_{L^q(A_R)}
		\leq C_q\|g\|_{L^q(A_R)}.
		\]
	\end{Lem}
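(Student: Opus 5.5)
Since this is the classical Bogovskii lemma, the plan is to prove it on the fixed annulus $A_1$ and then pass to $A_R$ by scaling. Scaling is what makes the constant independent of $R$.

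\emph{Step 1: construction on a star-shaped domain.} Let $\Omega\subset\R^3$ be bounded and star-shaped with respect to every point of a ball $B\Subset\Omega$. Fix $\varphi\in C_c^\infty(B)$ with $\int\varphi=1$. For $g\in C_c^\infty(\Omega)$ with $\int g=0$, set
\[
\mathcal B_\Omega g(x)=\int_\Omega g(y)\,N(x,y)\,dy,\qquad
N(x,y)=\frac{x-y}{|x-y|^3}\int_{|x-y|}^\infty\varphi\Bigl(y+\xi\frac{x-y}{|x-y|}\Bigr)\xi^2\,d\xi .
\]
I will verify directly that $\diver\mathcal B_\Omega g=g$; this is where $\int g=0$ and $\int\varphi=1$ enter. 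Star-shapedness guarantees that $\mathcal B_\Omega g$ has compact support in $\Omega$.

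Differentiating in $x$ gives
\[
\partial_k(\mathcal B_\Omega g)_i=\pv\int K_{ik}(x,x-y)\,g(y)\,dy+c_{ik}(x)\,g(x).
\]
The variable kernel $K_{ik}(x,\cdot)$ is homogeneous of degree $-3$, has zero spherical mean, and is smooth and bounded in $x$. The Calder\'on--Zygmund theorem for kernels of this type then gives $\|\nabla\mathcal B_\Omega g\|_{L^q}\le C_q\|g\|_{L^q}$ for every $1<q<\infty$. Density extends the operator to $L^q_0(\Omega)\to W^{1,q}_0(\Omega)$, and the operator does not depend on $q$. I expect this singular-integral bound to be the main obstacle. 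I would take it from \cite{B1980} or \cite[Chapter~III]{G2011} rather than redo it.

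\emph{Step 2: the annulus $A_1$.} $A_1$ is a bounded Lipschitz domain, so it is a finite union $A_1=\bigcup_{i=1}^m\Omega_i$, where each $\Omega_i$ is star-shaped with respect to a ball. Using a fixed partition of unity and a fixed chain of overlaps (\cite[Lemma~III.3.4]{G2011}), I decompose any $g\in L^q_0(A_1)$ linearly as $g=\sum_i g_i$. Here $g_i\in L^q_0(\Omega_i)$ and $\|g_i\|_{L^q}\le C_q\|g\|_{L^q}$. The decomposition consists of multiplication by cutoffs followed by the subtraction of multiples of fixed bumps, so it is independent of $q$. I then set $\mathcal B_1g:=\sum_i\mathcal B_{\Omega_i}g_i$, with each term extended by zero.

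\emph{Step 3: scaling.} For $g\in L^q_0(A_R)$, let $g_1(y):=g(Ry)$, so that $g_1\in L^q_0(A_1)$. Define
\[
\mathcal B_Rg(x):=R\,(\mathcal B_1g_1)(x/R).
\]
Then $\diver\mathcal B_Rg(x)=(\diver\mathcal B_1g_1)(x/R)=g(x)$ and $\nabla\mathcal B_Rg(x)=(\nabla\mathcal B_1g_1)(x/R)$. Consequently $\|\nabla\mathcal B_Rg\|_{L^q(A_R)}$ and $\|g\|_{L^q(A_R)}$ are $R^{3/q}$ times the corresponding norms on $A_1$. This gives the stated estimate with $C_q$ independent of $R$, and $\mathcal B_R$ is independent of $q$ because $\mathcal B_1$ is.
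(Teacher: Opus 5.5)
Your proposal is correct and takes essentially the same approach as the paper, which gives no proof of its own beyond citing Bogovskii and Galdi's Theorem~III.3.1: that proof is exactly the star-shaped construction combined with the finite star-shaped decomposition of a Lipschitz domain, and the $R$-independence of $C_q$ used in Lemma~\ref{lem:Caccioppoli} comes from the dilation $\mathcal B_Rg(x)=R(\mathcal B_1g_1)(x/R)$ that you wrote down. One small imprecision, harmless since you defer that step to the references anyway: after differentiation the kernel is homogeneous of degree $-3$ with zero spherical mean only up to a weakly singular remainder of size $O(|x-y|^{-2})$, which arises from expanding $(|x-y|+r)^2$ and from $x$-derivatives falling on $\varphi$; this remainder is bounded on $L^q$ by Young's inequality.
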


	The second tool is the localized Brezis--Gallouet inequality on the fixed nested domains in \eqref{eq:fixed-meridional-domains}; see \cite{BG1980} and \cite[Section~3]{CPZ2020}.

	\begin{Lem}
		\label{lem:localized-BG}
		Let $V=(V_1,V_2)\in H^2(\widetilde D_2)$. Then
		\begin{equation*}
			\|V\|_{L^\infty(\widetilde D_3)}
			\leq
			C\left(1+\|V\|_{H^1(\widetilde D_2)}\right)
			\left[\log\left(\ee+\|\widetilde\Delta V\|_{L^2(\widetilde D_2)}\right)\right]^{1/2},
		\end{equation*}
		where $\widetilde\Delta=\partial_{\widetilde r}^2+\partial_{\widetilde z}^2$, and $C$ depends only on the fixed pair $\widetilde D_3\Subset\widetilde D_2$.
	\end{Lem}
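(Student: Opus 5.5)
This is the localized Brezis--Gallouet inequality of Lemma \ref{lem:localized-BG}. I will reduce it to the classical inequality on $\R^2$ through a cutoff and an extension, and then prove the whole-space version by splitting in frequency.

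\emph{Step 1: reduction to $\R^2$.} Fix a cutoff $\chi\in C_c^\infty(\widetilde D_2)$ with $\chi\equiv1$ on $\widetilde D_3$; it depends only on the fixed pair $\widetilde D_3\Subset\widetilde D_2$. Set $W=\chi V$, extended by zero to a function in $H^2(\R^2)$. Then $\|V\|_{L^\infty(\widetilde D_3)}\leq\|W\|_{L^\infty(\R^2)}$, and $\|W\|_{H^1(\R^2)}\leq C\|V\|_{H^1(\widetilde D_2)}$. For the Laplacian,
\[
\widetilde\Delta W=\chi\widetilde\Delta V+2\nabla\chi\cdot\nabla V+V\widetilde\Delta\chi ,
\]
so
\[
\|\widetilde\Delta W\|_{L^2}\leq C\bigl(\|\widetilde\Delta V\|_{L^2(\widetilde D_2)}+\|V\|_{H^1(\widetilde D_2)}\bigr).
\]
On $\R^2$ one has $\|D^2W\|_{L^2}=\|\widetilde\Delta W\|_{L^2}$ by Plancherel. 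So no interior $H^2$ estimate is needed; it is enough to bound $\|\widetilde\Delta W\|_{L^2}$.

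\emph{Step 2: the whole-space inequality.} For $W\in H^2(\R^2)$ I estimate $\|W\|_\infty\leq\|\widehat W\|_{L^1}$ and split the frequency integral at a level $\Lambda\geq1$.
\begin{itemize}
\item On $|\xi|\leq\Lambda$, Cauchy--Schwarz against $(1+|\xi|^2)^{-1}$ gives a bound $C\|W\|_{H^1}(\log(\ee+\Lambda))^{1/2}$.
\item On $|\xi|>\Lambda$, Cauchy--Schwarz against $|\xi|^{-4}$ gives a bound $C\Lambda^{-1}\|\,|\xi|^2\widehat W\|_{L^2}=C\Lambda^{-1}\|\widetilde\Delta W\|_{L^2}$.
\end{itemize}
Choose $\Lambda=\ee+\|\widetilde\Delta W\|_{L^2}$. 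The high-frequency term is then at most $C$, and
\[
\|W\|_\infty\leq C\bigl(1+\|W\|_{H^1}\bigr)\bigl[\log\bigl(\ee+\|\widetilde\Delta W\|_{L^2}\bigr)\bigr]^{1/2}.
\]

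\emph{Step 3: absorbing the $H^1$ term inside the logarithm.} Step 1 gives
\[
\log\bigl(\ee+\|\widetilde\Delta W\|_{L^2}\bigr)\leq\log\bigl(\ee+C\|\widetilde\Delta V\|_{L^2}+C\|V\|_{H^1}\bigr),
\]
and the extra $\|V\|_{H^1}$ inside the logarithm has to be removed. The elementary inequality $\log(\ee+a+b)\leq\log(\ee+a)+\log(\ee+b)$ reduces this to controlling
\[
(1+h)\,[\log(\ee+Ch)]^{1/2},\qquad h=\|V\|_{H^1}.
\]
This is not bounded by $C(1+h)$ alone. The fix is to apply the inequality to $V/\lambda$ with $\lambda=1+h$. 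The left side scales linearly, the $H^1$ norm of $V/\lambda$ is at most $1$, and the logarithm only decreases:
\[
\log\bigl(\ee+\|\widetilde\Delta V\|/\lambda\bigr)\leq\log\bigl(\ee+\|\widetilde\Delta V\|\bigr).
\]
Concretely, if $\|V\|_{H^1(\widetilde D_2)}\leq1$, Steps 1--2 give
\[
\|V\|_{L^\infty(\widetilde D_3)}\leq C\bigl[\log\bigl(\ee+C+C\|\widetilde\Delta V\|_{L^2(\widetilde D_2)}\bigr)\bigr]^{1/2}\leq C'\bigl[\log\bigl(\ee+\|\widetilde\Delta V\|_{L^2(\widetilde D_2)}\bigr)\bigr]^{1/2}.
\]
Applying this to $V/(1+h)$ and multiplying back by $1+h$ yields the stated inequality. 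Both the $H^1$-control and the Laplacian-only dependence come for free because the extension is by a cutoff to the whole plane.

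The main point to watch is exactly this last homogeneity step, since a naive combination leaves $\log(\ee+\|V\|_{H^1})$ in the bound. The normalization $V\mapsto V/(1+\|V\|_{H^1})$ handles it cleanly.
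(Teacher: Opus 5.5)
Your proposal is correct and follows the same route as the paper, which localizes with a cut-off supported in $\widetilde D_2$ and equal to one on $\widetilde D_3$ and then invokes the two-dimensional Brezis--Gallouet inequality in ratio form. Your frequency-splitting proof and the normalization $V\mapsto V/(1+\|V\|_{H^1(\widetilde D_2)})$ supply the details the paper leaves implicit; the normalization does the job of the ratio form by removing the $H^1$ norm from the logarithm.
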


	The displayed form follows from the usual ratio form of the two-dimensional Brezis--Gallouet inequality after localization by a cut-off supported in $\widetilde D_2$ and equal to one on $\widetilde D_3$.

	\section{New Calder\'on--Zygmund estimates for axisymmetric solutions}
	\label{sec:axisymmetric-refinement}
	
	This section contains the two estimates responsible for the improved logarithmic powers in Theorem \ref{thm:main}. We first combine rotational localization with shellwise kernel cancellation, and then derive the annular $L^2$ control of $\nabla\omega_\theta$ needed in the new Calder\'on--Zygmund estimate.
	
	\subsection{A new Calder\'on--Zygmund estimate}
	
	We begin with the geometric localization consequence of axisymmetry.
	
	\begin{Lem}
		\label{lem:rotation}
		Let $g=g(r,z)$ be an axisymmetric scalar function. Fix $x_0=(R,0,z_0)$ and let $0<s\leq R/8$. Then
		\begin{equation}
			\int_{B_{2s}(x_0)}|g(x)|^2\,dx\leq C\frac{s}{R}\int_{\Q_R'(z_0)}|g(x)|^2\,dx,
			\label{eq:rotation-localization}
		\end{equation}
		where $C$ is an absolute constant.
	\end{Lem}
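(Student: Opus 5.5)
The idea is to exploit rotational invariance: an axisymmetric function has the same local $L^2$ mass on every rotated copy of the ball $B_{2s}(x_0)$. Roughly $R/s$ disjoint copies fit inside $\Q_R'(z_0)$, so averaging gives the factor $s/R$.

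\textbf{Step 1: the rotated balls stay inside $\Q_R'(z_0)$.} For an angle $\phi$, let $\mathcal R_\phi$ denote rotation by $\phi$ about the $z$-axis, and set $x_\phi:=\mathcal R_\phi x_0=(R\cos\phi,R\sin\phi,z_0)$. Since $g$ depends only on $(r,z)$ and $\mathcal R_\phi$ preserves Lebesgue measure,
\[
\int_{B_{2s}(x_\phi)}|g|^2\,dx=\int_{B_{2s}(x_0)}|g|^2\,dx \qquad\text{for every }\phi .
\]
Because $2s\leq R/4$, every point of $B_{2s}(x_\phi)$ satisfies $|x'|\in(R-2s,R+2s)\subset[3R/4,5R/4]$ and $|z-z_0|<2s\leq R/4<R/2$. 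Hence $B_{2s}(x_\phi)\subset\Q_R'(z_0)$, up to a boundary set of measure zero in the radial direction, which does not affect the integrals.

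\textbf{Step 2: choose disjoint rotated copies.} Take $N:=\lfloor \pi R/(2s)\rfloor\geq 1$ and angles $\phi_j=2\pi j/N$ for $j=0,\dots,N-1$. The centers $x_{\phi_j}$ lie on the circle of radius $R$, and adjacent centers are at distance
\[
2R\sin(\pi/N)\geq 2R\cdot\frac{2}{\pi}\cdot\frac{\pi}{N}=\frac{4R}{N}\geq \frac{8s}{\pi}.
\]
This is slightly less than $4s$, so the balls of radius $2s$ need not be disjoint. To fix this, take instead $N:=\lfloor R/(2s)\rfloor$, which is at least $4$ since $s\leq R/8$. Then $\pi/N\leq\pi/4$, so the separation is
\[
2R\sin(\pi/N)\geq \frac{2\sqrt2}{\pi}\cdot\frac{2\pi R}{N}\cdot\frac12\cdot 2\geq 2.8\,\frac{R}{N}\geq 5.6\,s>4s .
\]
So the $N$ balls are pairwise disjoint. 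Even if the constants were not tight, a bounded-overlap argument would suffice, because each point lies in $O(1)$ of the balls.

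\textbf{Step 3: sum over the copies.} Summing the equal integrals over the disjoint balls, all contained in $\Q_R'(z_0)$, gives
\[
N\int_{B_{2s}(x_0)}|g|^2\,dx\leq\int_{\Q_R'(z_0)}|g|^2\,dx .
\]
Since $N\geq R/(4s)$, this yields \eqref{eq:rotation-localization} with $C=4$.

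\textbf{Main obstacle.} The only delicate point is the chord-length bookkeeping in Step 2, which ensures that disjointness (or bounded overlap) holds with an absolute constant uniformly in $s\leq R/8$. Everything else follows from rotational invariance together with the containment shown in Step 1.
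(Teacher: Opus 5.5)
Your proof is correct and follows the paper's argument: the paper places $N=\max\{1,\lfloor c_0R/s\rfloor\}$ rotated copies of $B_{2s}(x_0)$ on the circle of radius $R$ at height $z_0$, uses $2R\sin(\pi/N)\ge 4R/N$ to show they are disjoint, and sums the equal integrals; your choice $N=\lfloor R/(2s)\rfloor\ge 4$ is the case $c_0=1/2$, and it lets you skip the paper's small-$N$ case. The remaining issues are cosmetic: drop the abandoned first choice of $N$, simplify the chord chain in Step 2 (it directly gives $2R\sin(\pi/N)\ge 4\sqrt2\,R/N$), and note that the containment $B_{2s}(x_\phi)\subset\Q_R'(z_0)$ is strict, so the measure-zero caveat is unnecessary.
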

	
	\begin{proof}
		Choose a sufficiently small absolute constant $c_0>0$ and denote
		\[
		N:=\max\left\{1,\left\lfloor c_0\frac{R}{s}\right\rfloor\right\}.
		\]
		If $N\geq2$, choose $x_k=(R\cos\theta_k,R\sin\theta_k,z_0)$ with $\theta_k=2\pi k/N$, $k=1,\dots,N$. For $j\neq k$,
		\[
		|x_j-x_k|=2R\left|\sin\frac{\theta_j-\theta_k}{2}\right|.
		\]
		For adjacent centers, $|x_{k+1}-x_k|=2R\sin(\pi/N)\geq4R/N$ when $N\geq2$. Since $N\leq c_0R/s$, choosing $c_0\leq1/2$ we have $|x_{k+1}-x_k|\geq8s$. Hence, the balls $B_{2s}(x_k),\,k=1,\dots,N$ are pairwise disjoint. Due to $2s\leq R/4$, all of these balls lie in $\Q_R'(z_0)$. Axisymmetry and rotational invariance of Lebesgue measure imply
		\[
		\int_{B_{2s}(x_k)}|g|^2\,dx=\int_{B_{2s}(x_0)}|g|^2\,dx,
		\]
		which implies
		\[
		N\int_{B_{2s}(x_0)}|g|^2\,dx\leq\int_{\Q_R'(z_0)}|g|^2\,dx.
		\]
		When $c_0R/s\geq2$, one has $N\geq c_0R/s-1 \geq c_0R/(2s)$ and \eqref{eq:rotation-localization} follows. When $c_0R/s<2$, the ratio $s/R$ is bounded by $c_0/2$, and $B_{2s}(x_0)\subset\Q_R'(z_0)$ gives \eqref{eq:rotation-localization} after enlarging $C$.
	\end{proof}
	
	
	The packing estimate is now combined with the kernel assumptions introduced in \eqref{eq:CZ-size}--\eqref{eq:CZ-cancellation}. Let $K$ be a matrix-valued convolution kernel satisfying those assumptions. For an axisymmetric scalar function $f=f(r,z)$, define $F(x)=f(r,z)e_\theta(x)$. The following estimate is the main singular-integral input for the meridional velocity-gradient bound.
	
	\begin{Lem}
		\label{lem:refined-CZ}
		Let $0<a<2$, $\beta\geq0$, and $R_*\geq2$. Suppose that $f=f(r,z)$ belongs to $C^1([0,\infty)\times\R)$ and that the azimuthal vector field $F=f e_\theta$, initially defined for $r>0$, extends to a $C^1$ vector field on $\R^3$. Assume that
		\begin{equation}
			|f(r,z)|+|\nabla f(r,z)|\leq A r^{-a}[\Lr r]^{\beta},\qquad r\geq R_*,\ z\in\R,
			\label{eq:f-pointwise}
		\end{equation}
		and
		\begin{equation}
			B_*:=\sup_{0\leq r\leq R_*,\ z\in\R}|f(r,z)|<\infty.
			\label{eq:f-core-bound}
		\end{equation}
		Fix $x_0=(R,0,z_0)$ with $R\geq\max\{32,2R_*\}$ and define
		\begin{equation}
			\mathcal{G}_R(z_0):=\norm{\nabla f}_{L^2(\Q_R'(z_0))}.
			\label{eq:GR-definition}
		\end{equation}
		Then,
		\begin{equation*}
			\left|\pv\int_{\R^3}K(x_0-y)F(y)\,dy\right|\leq C A R^{-a}[\Lr R]^{\beta}+CB_*R_*^2R^{-2}+C R^{-1/2}\Lr R\,\mathcal{G}_R(z_0),
		\end{equation*}
		where $C$ depends only on $a$, $\beta$ and the kernel constants $C_K$.
	\end{Lem}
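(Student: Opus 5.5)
We split the principal-value integral into three regions according to the distance $|x_0-y|$: a near region $|x_0-y|<s_0$ with $s_0\sim R^{-?}$ (we shall in fact take the innermost scale fixed, $s_0=1$), an intermediate range of dyadic shells $s_0\leq|x_0-y|<R/8$, and a far region $|x_0-y|\geq R/8$. The far region is handled by size alone. Using \eqref{eq:CZ-size}, for $y$ with $|y'|\geq R_*$ we have $|F(y)|\leq A|y'|^{-a}[\Lr{|y'|}]^\beta$. After integrating in the axial variable, $\int_{\R}|x_0-y|^{-3}\,ds\lesssim |x_0'-y'|^{-2}$, so the contribution reduces to a two-dimensional integral $\int_{|x_0'-y'|\gtrsim R}|x_0'-y'|^{-2}|y'|^{-a}[\log]^\beta\,dy'$. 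Splitting it into $|y'|\leq 2R$ and $|y'|>2R$ gives $AR^{-a}[\Lr R]^\beta$, using $a<2$ for the inner part and the integrability of $|y'|^{-2-a}$ for the outer part. The core $|y'|\leq R_*$ contributes at most $B_*R_*^2R^{-2}$, since there $|x_0'-y'|\geq R/2$.

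Near $x_0$, within $B_{1}(x_0)$, the cancellation \eqref{eq:CZ-cancellation} lets us write the p.v. integral as $\int K(x_0-y)(F(y)-F(x_0))\,dy$. This is bounded by $\sup_{B_1(x_0)}|\nabla F|\int_{|y|<1}|y|^{-2}\,dy$. Since $|\nabla F|\lesssim|\nabla f|+|f|/r\lesssim AR^{-a}[\Lr R]^\beta$ there by \eqref{eq:f-pointwise} (recall $R\geq 2R_*$), this term fits the first bound.

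The intermediate shells $S_j=\{2^{j}\leq|x_0-y|<2^{j+1}\}$, for $0\leq j\lesssim\log R$, carry the main term. On each shell we subtract the mean, $\int_{S_j}K(x_0-y)(F(y)-\overline{F}_{j})\,dy$, with $\overline F_j$ the average of $F$ over $B_{2^{j+1}}(x_0)$. This is bounded by $2^{-3j}\|F-\overline F_j\|_{L^1(B_{2^{j+1}})}\lesssim 2^{-3j/2}\|F-\overline F_j\|_{L^2(B_{2^{j+1}})}$. Poincaré on the ball gives $\lesssim 2^{-j/2}\|\nabla F\|_{L^2(B_{2^{j+1}}(x_0))}$. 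We then write $\nabla F$ in terms of $\nabla_{r,z}f$ and $f/r$, where $|e_\theta$-rotation terms$|\lesssim |f|/R$ are controlled pointwise and contribute $A R^{-a}[\Lr R]^\beta\cdot 2^{j}/R$, summable to $AR^{-a}[\Lr R]^\beta$. The $\nabla f$ part is estimated with Lemma \ref{lem:rotation} applied with $s=2^j\leq R/8$, giving $\|\nabla f\|_{L^2(B_{2^{j+1}})}\lesssim (2^j/R)^{1/2}\mathcal G_R(z_0)$. Each shell thus contributes $R^{-1/2}\mathcal G_R(z_0)$, and summing over $\sim\log R$ shells yields $R^{-1/2}\Lr R\,\mathcal G_R(z_0)$.

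The main obstacle is precisely this step: the Poincaré bound must be taken on balls (or shells) contained in $\Q_R'(z_0)$, and the vector nature of $F=fe_\theta$ must be handled, since $F$ is not itself axisymmetric as a scalar. We will deal with this by noting that $|\nabla F|^2=|\nabla_{r,z}f|^2+f^2/r^2$ is an axisymmetric scalar, so Lemma \ref{lem:rotation} applies to it directly. The $f^2/r^2$ term is then bounded pointwise via \eqref{eq:f-pointwise}. We must also check that the last shell meets the far region consistently, which requires $R\geq 32$.
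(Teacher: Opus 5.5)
\textbf{Gap in the far-field estimate.} Your near-field and shell estimates are sound, but the far-field step does not cover the whole far region. You claim that after integrating $|x_0-y|^{-3}$ over the whole axial line, the region $\{|x_0-y|\geq R/8\}$ reduces to a two-dimensional integral over $\{|x_0'-y'|\gtrsim R\}$. That is false. The far region contains the two half-infinite cylinders $\{|x_0'-y'|<R/16,\ |k-z_0|\geq R/8\}$, whose points are far from $x_0$ only in the axial direction, and there $|x_0'-y'|$ can be arbitrarily small. On this set, integrating over all $k\in\R$ produces $2|x_0'-y'|^{-2}$, which is not integrable near $y'=x_0'$ in $\R^2$. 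So your reduction fails exactly on the piece that reflects the absence of any decay in $z$.

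The missing ingredient is the paper's $\Omega_3$ bound. On $\{R/2<|y'|<2R,\ |x_0-y|\geq R/8\}$, use only the size bound $|F|\lesssim AR^{-a}[\Lr R]^\beta$ together with
\[
\int_{\{|y'-x_0'|<3R,\ |x_0-y|\geq R/8\}}|x_0-y|^{-3}\,dy\leq C .
\]
This holds because on the part with $|k-z_0|\leq R$ the integrand is $\lesssim R^{-3}$ on a set of volume $\lesssim R^3$. On the part with $|k-z_0|>R$ the integral is at most $CR^2\int_R^\infty t^{-3}\,dt$. Add this piece to your estimates on $\{|y'|\leq R/2\}\cup\{|y'|\geq 2R\}$ and on the part of the annulus where $|x_0'-y'|\gtrsim R$, which are correct as written. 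With that, the far field is complete.

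\textbf{Comparison of the shell estimates.} The rest follows the paper's near/mid/far structure. The only difference is in the intermediate shells:
\begin{itemize}
\item You subtract the vector mean $\overline F_j$ and apply Poincar\'e to $F$ itself, using $|\nabla F|^2=|\nabla_{r,z}f|^2+f^2/r^2$.
\item The paper subtracts $m_je_\theta(x_0)$, with $m_j$ the scalar mean of $f$, and applies Poincar\'e to $f$. It then treats the turning of $e_\theta$ as a separate term $I_j^{(2)}$ via $|e_\theta(y)-e_\theta(x_0)|\lesssim s_j/R$.
\end{itemize}
Your $f/r$ term is the paper's $I_j^{(2)}$ in another form and gives the same summable factor $2^j/R$. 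So your variant is valid and slightly more compact.

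One caution: this works only as in your third paragraph. There, Lemma \ref{lem:rotation} is applied to $|\nabla_{r,z}f|$ alone, and $\|f/r\|_{L^2(B_{2^{j+1}}(x_0))}$ is bounded pointwise on the small ball. Your last paragraph suggests applying the lemma to all of $|\nabla F|$. That would replace the pointwise bound by $(2^j/R)^{1/2}\|f/r\|_{L^2(\Q_R'(z_0))}\sim(2^j/R)^{1/2}AR^{1/2-a}[\Lr R]^\beta$. This gives $AR^{-a}[\Lr R]^\beta$ per shell and costs an extra factor $\Lr R$ after summation.
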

	
	\begin{proof}
		Write the integral as $I=I_{\mathrm{near}}+I_{\mathrm{mid}}+I_{\mathrm{far}}$, corresponding respectively to $|x_0-y|\leq1$, $1<|x_0-y|<R/8$, and $|x_0-y|\geq R/8$.
		

        \noindent\underline{\textbf{Estimate of \(I_{\mathrm{near}}\).}} Note \(R\geq R_*+1\). For \(|x_0-y|\leq1\), one has $R-1\leq r(y)\leq R+1$, and hence \(r(y)\geq R_*\) and \(r(y)\sim R\). By \eqref{eq:CZ-cancellation}, we have
        \[
        \begin{aligned}
        	I_{\mathrm{near}}
        	=\lim_{\varepsilon\to0^+}\int_{\varepsilon<|x_0-y|\leq1}K(x_0-y)F(y)\,dy=\lim_{\varepsilon\to0^+}\int_{\varepsilon<|x_0-y|\leq1}K(x_0-y)\bigl(F(y)-F(x_0)\bigr)\,dy.
        \end{aligned}
        \]
        For \(t\in[0,1]\), let $\xi_t=x_0+t(y-x_0)$. Then \(|\xi_t-x_0|\leq1\), and therefore $R-1\leq r(\xi_t)\leq R+1$ for $r(\xi_t)\geq R_*$. Using \eqref{eq:f-pointwise} and
        \[
        |\nabla F(\xi_t)|\leq C\left(|\nabla_{r,z}f(\xi_t)|+\frac{|f(\xi_t)|}{r(\xi_t)}\right),
        \]
        we obtain
        \[
        |\nabla F(\xi_t)|\leq CA R^{-a}[\Lr R]^\beta
        \]
        uniformly for \(t\in[0,1]\). Hence, we obtain
        \[
        \begin{aligned}
        	|F(y)-F(x_0)|
        	\leq |y-x_0|\int_0^1|\nabla F(\xi_t)|\,dt\leq CA R^{-a}[\Lr R]^\beta|x_0-y|,
        \end{aligned}
        \]
        which implies
        \begin{equation}
        	|I_{\mathrm{near}}|
        	\leq CA R^{-a}[\Lr R]^\beta\int_{|h|\leq1}|h|^{-2}\,dh
        	\leq CA R^{-a}[\Lr R]^\beta.
        	\label{eq:near-estimate}
        \end{equation} 
		
		\noindent\underline{\textbf{Estimate of \(I_{\mathrm{mid}}\).}} Choose $J\geq0$ such that $2^J<R/8\leq2^{J+1}$. For $0\leq j\leq J-1$, let $s_j=2^j$ and $E_j=\{s_j<|x_0-y|\leq2s_j\}$; let $s_J=2^J$ and $E_J=\{s_J<|x_0-y|<R/8\}$. Then $E_j\subset B_{2s_j}(x_0)\setminus B_{s_j}(x_0)$ and $s_j\leq R/8$. Denote $m_j:={|B_{2s_j}|^{-1}}\int_{B_{2s_j}(x_0)}f(y)\,dy$. By \eqref{eq:CZ-cancellation}, we have
		\[
		\int_{E_j}K(x_0-y)m_je_\theta(x_0)\,dy=0,
		\]
		which implies
		\begin{align*}
			\int_{E_j}K(x_0-y)F(y)\,dy
			={}&\int_{E_j}K(x_0-y)(f(y)-m_j)e_\theta(x_0)\,dy\notag\\
			&+\int_{E_j}K(x_0-y)f(y)(e_\theta(y)-e_\theta(x_0))\,dy\notag\\
			=:{}&I_j^{(1)}+I_j^{(2)}.
		\end{align*}
		For $I_j^{(1)}$, by H\"older's inequality we have
		\[
		|I_j^{(1)}|\leq\norm{K(x_0-\cdot)}_{L^2(E_j)}\norm{f-m_j}_{L^2(E_j)} \leq CR^{-1/2}\mathcal{G}_R(z_0), 
		\]
		where we used that the kernel estimate gives
		\begin{equation*}
			\norm{K(x_0-\cdot)}_{L^2(E_j)}\leq Cs_j^{-3/2},
		\end{equation*}
		and that the Poincar\'e inequality and Lemma \ref{lem:rotation} give
		\begin{equation*}
			\norm{f-m_j}_{L^2(E_j)}\leq \norm{f-m_j}_{L^2(B_{2s_j}(x_0))} \leq Cs_j\norm{\nabla_{r,z} f}_{L^2(B_{2s_j}(x_0))}\leq Cs_j\left(\frac{s_j}{R}\right)^{1/2}\mathcal{G}_R(z_0).
		\end{equation*}
		Therefore, from $J+1 \leq C\Lr R$ we infer that
		\begin{equation}
			\sum_{j=0}^J|I_j^{(1)}|\leq CR^{-1/2}\Lr R\,\mathcal{G}_R(z_0).
			\label{eq:sum-Ij1}
		\end{equation}
		
		For $y\in E_j$, $j=0,\cdots J$, one has $ R_* \leq {7}R/8 \leq r \leq {9}R/8$. Thus, writing $y'=(r\cos\varphi,r\sin\varphi)$ and $x_0'=(R,0)$, we have 
		\begin{align} \label{eq:e_theta}
			|e_\theta(y)-e_\theta(x_0)| = |(-\sin \varphi, \cos \varphi) - (0,1)|=2 |\sin \frac\varphi2 |\leq\frac{|x_0'-y'|}{\sqrt{rR}}\leq C\frac{|x_0-y|}{R}\leq C\frac{s_j}{R},
		\end{align}
		where we used that
		\begin{align*}
			|x_0'-y'|^2 = R^2 + r^2 - 2rR\cos \varphi = (R-r)^2+4 R r \sin ^2 \frac{\varphi}{2} \geq 4 R r \sin ^2 \frac{\varphi}{2}.
		\end{align*}
		It follows from \eqref{eq:CZ-size}, \eqref{eq:f-pointwise} and \eqref{eq:e_theta} that
		\[
		|I_j^{(2)}|\leq CA R^{-a}[\Lr R]^{\beta}\frac{s_j}{R}\int_{E_j}|x_0-y|^{-3}\,dy\leq CA R^{-a}[\Lr R]^{\beta}\frac{s_j}{R}.
		\]
		Hence, by $\sum_{j=0}^Js_j = 2^{J+1} - 1\leq CR$ we have
		\begin{equation}
			\sum_{j=0}^J|I_j^{(2)}|\leq CA R^{-a}[\Lr R]^{\beta}.
			\label{eq:sum-Ij2}
		\end{equation}
		Combining \eqref{eq:sum-Ij1} and \eqref{eq:sum-Ij2}, we have 
		\begin{equation}
			|I_{\mathrm{mid}}|\leq CA R^{-a}[\Lr R]^{\beta}+CR^{-1/2}\Lr R\,\mathcal{G}_R(z_0).
			\label{eq:mid-estimate}
		\end{equation}
		
		\noindent\underline{\textbf{Estimate of \(I_{\mathrm{far}}\).}} For the far field, write $y=(y',k)$ and $\rho=|y'|$, and split it into
		\[
		\Omega_1=\{\rho\geq2R\},\qquad \Omega_2=\{\rho\leq R/2\},\qquad \Omega_3=\{R/2<\rho<2R,\ |x_0-y|\geq R/8\}.
		\]
		On \(\Omega_1\), we have $|x_0'-y'|\ge \rho-R\ge {\rho}/{2}$. By \eqref{eq:CZ-size}, \eqref{eq:f-pointwise}, and cylindrical coordinates, we obtain
		\begin{align}
			\int_{\Omega_1}|K(x_0-y)||F(y)|dy
			&\le CA\int_{2R}^\infty\int_0^{2\pi}
			\rho^{-a}[\Lr\rho]^\beta
			\frac{\rho}{\rho^2}d\varphi d\rho \nonumber\\
			&\le CA\int_{2R}^\infty
			\rho^{-a-1}[\Lr\rho]^\beta d\rho \nonumber\\
			&\le CA R^{-a}[\Lr R]^\beta, \label{eq:omega1}
		\end{align}
		where we used that
		\begin{align} \label{eq:2d-Ker}
			\int_{\mathbb R}\frac{dk}{(|x_0'-y'|^2+(z_0-k)^2)^{3/2}}
			=\frac{2}{|x_0'-y'|^2}.
		\end{align}
		
		On $\Omega_2$, $|x_0'-y'|\geq R/2$. Splitting the radial integral at $R_*$ and using \eqref{eq:f-core-bound} on the inner part and \eqref{eq:f-pointwise} on the outer part, we obtain
		\begin{align}
			\int_{\Omega_2}|K(x_0-y)||F(y)|\,dy
			&\leq CB_*R^{-2}\int_0^{R_*}\rho\,d\rho+CAR^{-2}\int_{R_*}^{R/2}\rho^{1-a}[\Lr\rho]^\beta\,d\rho \nonumber\\
			&\leq CB_*R_*^2R^{-2}+CA R^{-a}[\Lr R]^\beta, \label{eq:omega2}
		\end{align}
		where \eqref{eq:2d-Ker} and $a<2$ are used. 
		
		Finally, on $\Omega_3$, one scales $y=x_0+R\widetilde y$ and has
		\begin{align*}
			\int_{\Omega_3}|x_0-y|^{-3}\,dy =\int_{1/2<|e_1+\widetilde y'|<2,\,|\widetilde y|\geq1/8}|\widetilde y|^{-3}\,d\widetilde y \leq \int_{|\widetilde y'|<3,\,|\widetilde y|\geq1/8}|\widetilde y|^{-3}\,d\widetilde y.
		\end{align*}
			We split the last integral into the regions
		$|\widetilde y_3|\leq 1$ and $|\widetilde y_3|>1$.
		On the first region, the condition $|\widetilde y|\geq 1/8$ removes the
		singularity at the origin, and therefore
		\[
		\int_{|\widetilde y'|<3,\, |\widetilde y_3|\leq1,\,
				|\widetilde y|\geq1/8}
		|\widetilde y|^{-3}\,d\widetilde y
		\leq C.
		\]
		On the second region, by $|\widetilde y|\geq |\widetilde y_3|$ we have
		\[
		\int_{|\widetilde y'|<3,\, |\widetilde y_3|>1}
		|\widetilde y|^{-3}\,d\widetilde y
		\le
		C\int_{|t|>1}|t|^{-3}\,dt
		\leq C.
		\]
		Thus, we have
		\[
		\int_{\Omega_3}|x_0-y|^{-3}\,dy\leq C,
		\]
		which, together with \eqref{eq:f-pointwise}, implies
		\begin{align} \label{eq:omega3}
			\int_{\Omega_3}|K(x_0-y)||F(y)|\,dy\leq CA R^{-a}[\Lr R]^{\beta}.
		\end{align}
		Combining \eqref{eq:omega1}, \eqref{eq:omega2}, and \eqref{eq:omega3}, we obtain
		\begin{equation}
			|I_{\mathrm{far}}|\leq CA R^{-a}[\Lr R]^{\beta}+CB_*R_*^2R^{-2}.
			\label{eq:far-estimate}
		\end{equation}
		 
		 Adding \eqref{eq:near-estimate}, \eqref{eq:mid-estimate}, and \eqref{eq:far-estimate} together, we complete the proof.
	\end{proof}
	
	\subsection{Annular estimates for the azimuthal vorticity}
	
	We next estimate the quantity $\mathcal G_R(z_0)$ appearing in Lemma \ref{lem:refined-CZ}. Recall that $\Q_R(z_0)$ and $\Q_R'(z_0)$ are the nested cylinders defined in \eqref{eq:nested-cylinders}.
	
	\begin{Lem}
		\label{lem:wtheta-energy}
		Let $R\geq2$, $z_0\in\R$, and $U_R(z_0):=\norm{u}_{L^\infty(\Q_R(z_0))}$. Then
		\begin{equation}
			\norm{\nabla\omega_\theta}_{L^2(\Q_R'(z_0))}^2\leq CR^{-2}\bigl(1+RU_R(z_0)\bigr)\norm{(\omega_r,\omega_\theta)}_{L^2(\Q_R(z_0))}^2.
			\label{eq:scaled-wtheta-energy}
		\end{equation}
	\end{Lem}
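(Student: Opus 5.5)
We prove \eqref{eq:scaled-wtheta-energy} by a localized energy estimate for the $\omega_\theta$-equation in \eqref{eq:vorticity-equations}. Fix a cut-off $\phi=\phi(r,z)$ with $\phi\equiv1$ on the meridional section of $\Q_R'(z_0)$, $\operatorname{supp}\phi$ inside the section of $\Q_R(z_0)$, and $|\nabla\phi|\lesssim R^{-1}$, $|\nabla^2\phi|\lesssim R^{-2}$. Since $R\geq2$, on $\operatorname{supp}\phi$ we have $r\sim R\geq1$, so the axis plays no role. Multiply the $\omega_\theta$ equation by $\omega_\theta\phi^2$ and integrate over $\R^3$ with $dx=r\,dr\,d\theta\,dz$, so that $\partial_r^2+\frac1r\partial_r+\partial_z^2$ is the three-dimensional Laplacian acting on an axisymmetric scalar.

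\emph{Viscous terms.} Integration by parts gives
\[
-\int(\Delta-r^{-2})\omega_\theta\,\omega_\theta\phi^2
=\int|\nabla\omega_\theta|^2\phi^2+\int\frac{\omega_\theta^2}{r^2}\phi^2-\frac12\int\omega_\theta^2\Delta(\phi^2).
\]
The last term is bounded by $CR^{-2}\|\omega_\theta\|_{L^2(\Q_R)}^2$.

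\emph{Transport terms.} With $b=u_re_r+u_ze_z$, which satisfies $\diver b=0$ in three dimensions, we have
\[
\int b\cdot\nabla\omega_\theta\,\omega_\theta\phi^2=-\frac12\int\omega_\theta^2\, b\cdot\nabla\phi^2,
\]
which is bounded by $CR^{-1}U_R\|\omega_\theta\|_{L^2(\Q_R)}^2$. The stretching term satisfies
\[
\Bigl|\int\frac{u_r}{r}\omega_\theta^2\phi^2\Bigr|\leq CR^{-1}U_R\|\omega_\theta\|_{L^2(\Q_R)}^2.
\]

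\emph{Swirl term.} This is the main point. We integrate by parts in $z$ and use $\partial_z(u_\theta^2)=2u_\theta\partial_zu_\theta=-2u_\theta\omega_r$, since $\omega_r=-\partial_zu_\theta$. Then
\[
\Bigl|\int\frac1r\partial_z(u_\theta^2)\,\omega_\theta\phi^2\Bigr|
=\Bigl|\int\frac{2u_\theta\omega_r}{r}\,\omega_\theta\phi^2\Bigr|
\leq CR^{-1}U_R\bigl(\|\omega_r\|_{L^2(\Q_R)}^2+\|\omega_\theta\|_{L^2(\Q_R)}^2\bigr).
\]
No integration by parts is actually needed here. This is exactly why $\omega_r$ appears on the right-hand side.

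\emph{Conclusion.} Collecting the terms gives
\[
\int_{\Q_R'}|\nabla\omega_\theta|^2\leq C\bigl(R^{-2}+R^{-1}U_R\bigr)\|(\omega_r,\omega_\theta)\|_{L^2(\Q_R)}^2,
\]
which is \eqref{eq:scaled-wtheta-energy}, since $R^{-2}+R^{-1}U_R=R^{-2}(1+RU_R)$.

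\emph{Technical points.} All integrals are over compact sets away from the axis, so the integrations by parts are justified by smoothness (Lemma \ref{lem:uniform-regularity}). The only real care needed is to use the three-dimensional measure, so that the divergence-free structure of $b$ removes the transport term cleanly, and to keep track of the $r^{-1}\sim R^{-1}$ weights. I do not foresee a substantial obstacle.
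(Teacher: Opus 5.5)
Your proof is correct and follows the paper's argument: a cut-off energy estimate for the $\omega_\theta$ equation, with the transport term removed via $\diver b=0$, the stretching term bounded by $R^{-1}U_R$, and the swirl term rewritten as $-2r^{-1}u_\theta\omega_r\omega_\theta$, which is where $\omega_r$ enters. The differences are cosmetic: the paper rescales to the fixed cylinders $\widetilde\Q_1,\widetilde\Q_2$ and then scales back, and it writes the viscous term as $\int|\nabla(\omega_\theta\psi)|^2$, which needs only first derivatives of the cut-off; you work directly at scale $R$ and use $|\nabla^2\phi|\lesssim R^{-2}$.
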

	
	\begin{proof}
		Translate $z_0$ to zero and use the scaling
		\[
		\widetilde x=\frac{x}{R},\qquad \widetilde u(\widetilde x)=Ru(R\widetilde x),\qquad \widetilde\omega(\widetilde x)=R^2\omega(R\widetilde x).
		\]
		The cylinders $\Q_R(z_0)$ and $\Q_R'(z_0)$ then become the fixed cylinders $\widetilde\Q_1$ and $\widetilde\Q_2$ defined after \eqref{eq:fixed-meridional-domains}.
		Choose an axisymmetric cut-off $\psi\in C_c^\infty(\widetilde\Q_1)$ such that $\psi=1$ on $\widetilde\Q_2$ and $|\nabla\psi|\leq C$. 
		
		For simplicity, we drop tildes during the following fixed-scale calculation. Multiplying the $\omega_\theta$ equation in \eqref{eq:vorticity-equations} by $-\omega_\theta\psi^2$ and integrating in $\widetilde\Q_1$ give
		\begin{align}
			\int |\nabla(\omega_\theta\psi)|^2\,dx+\int\frac{\omega_\theta^2\psi^2}{r^2}\,dx
			&=\int\omega_\theta^2|\nabla\psi|^2\,dx+\frac12\int\omega_\theta^2(u_r\partial_r+u_z\partial_z)(\psi^2)\,dx\notag\\
			&\quad+\int\frac{u_r}{r}\omega_\theta^2\psi^2\,dx-2\int\frac{u_\theta\omega_r\omega_\theta}{r}\psi^2\,dx.
			\label{eq:wtheta-energy-identity}
		\end{align}
		Here, the transport term has been integrated by parts using $\diver b=0$ and we have used $r^{-1}\partial_z(u_\theta^2)=-2r^{-1}u_\theta\omega_r$. Since $1/2<r<3/2$ on the support of $\psi$ and $|\nabla\psi|\leq C$, the four terms on the right-hand side of \eqref{eq:wtheta-energy-identity} satisfy
		\[
		\int\omega_\theta^2|\nabla\psi|^2\,dx\leq C\|\omega_\theta\|_{L^2(\widetilde\Q_1)}^2,
		\]
		\[
		\left|\frac12\int\omega_\theta^2b\cdot\nabla(\psi^2)\,dx\right|\leq C\|u\|_{L^\infty(\widetilde\Q_1)}\|\omega_\theta\|_{L^2(\widetilde\Q_1)}^2,
		\]
		\[
		\left|\int\frac{u_r}{r}\omega_\theta^2\psi^2\,dx\right|\leq C\|u\|_{L^\infty(\widetilde\Q_1)}\|\omega_\theta\|_{L^2(\widetilde\Q_1)}^2,
		\]
		and
		\[
		\left|2\int\frac{u_\theta\omega_r\omega_\theta}{r}\psi^2\,dx\right|\leq C\|u\|_{L^\infty(\widetilde\Q_1)}\left(\|\omega_r\|_{L^2(\widetilde\Q_1)}^2+\|\omega_\theta\|_{L^2(\widetilde\Q_1)}^2\right).
		\]
		Consequently, the left-hand side of \eqref{eq:wtheta-energy-identity} is bounded by $C(1+\|u\|_{L^\infty(\widetilde\Q_1)})\|(\omega_r,\omega_\theta)\|_{L^2(\widetilde\Q_1)}^2$. This implies
		\begin{equation}
			\norm{\nabla\widetilde\omega_\theta}_{L^2(\widetilde\Q_2)}^2\leq C\bigl(1+\norm{\widetilde u}_{L^\infty(\widetilde\Q_1)}\bigr)\norm{(\widetilde\omega_r,\widetilde\omega_\theta)}_{L^2(\widetilde\Q_1)}^2.
			\label{eq:fixed-wtheta-energy}
		\end{equation}
		The scaling relations are
		\[
		\norm{\nabla\widetilde\omega_\theta}_{L^2(\widetilde\Q_2)}^2=R^3\norm{\nabla\omega_\theta}_{L^2(\Q_R'(z_0))}^2,
		\]
		\[
		\norm{(\widetilde\omega_r,\widetilde\omega_\theta)}_{L^2(\widetilde\Q_1)}^2=R\norm{(\omega_r,\omega_\theta)}_{L^2(\Q_R(z_0))}^2,
		\qquad
		\norm{\widetilde u}_{L^\infty(\widetilde\Q_1)}=RU_R(z_0).
		\]
		Substituting these identities into \eqref{eq:fixed-wtheta-energy} gives \eqref{eq:scaled-wtheta-energy}.
	\end{proof}
	
	Combining this localized energy inequality with the annular hypothesis and the known velocity decay gives the quantitative bound required in Lemma \ref{lem:refined-CZ}.
	
	\begin{Cor}
		\label{cor:annular-gradient}
		Under the assumptions of Theorem \ref{thm:main}, we have
		\begin{equation*}
			\norm{\nabla\omega_\theta}_{L^2(\Q_R'(z_0))}\leq C(M) R^{-3/4}[\Lr R]^{1/4}
		\end{equation*}
		for all $R \geq 2R_*$, uniformly in $z_0\in\R$.
	\end{Cor}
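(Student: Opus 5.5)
We apply Lemma \ref{lem:wtheta-energy} directly and control its two ingredients: the local sup of the velocity, $U_R(z_0)$, and the local vorticity energy $\|(\omega_r,\omega_\theta)\|_{L^2(\Q_R(z_0))}^2$.

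\emph{Velocity factor.} For $R\geq 2R_*$, every point of $\Q_R(z_0)$ satisfies $r>R/2\geq R_*$. Hence \eqref{eq:known-u} in Proposition \ref{prop:CPZ} gives
\[
U_R(z_0)\leq A_*(R/2)^{-1/2}[\Lr{R/2}]^{1/2}\leq C(M)R^{-1/2}[\Lr R]^{1/2},
\]
uniformly in $z_0$. Consequently
\[
1+RU_R(z_0)\leq C(M)R^{1/2}[\Lr R]^{1/2}
\]
for $R\geq2$.

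\emph{Vorticity factor.} We bound $\|(\omega_r,\omega_\theta)\|_{L^2(\Q_R(z_0))}^2$ by the annular hypothesis \eqref{eq:annular-assumption}. Since $|\omega|\leq C|\nabla u|$ pointwise, and $\Q_R(z_0)\subset\{R/2\le |x'|\le 3R/2\}\subset \A_{R/2}\cup\A_{R}$, we get
\[
\|(\omega_r,\omega_\theta)\|_{L^2(\Q_R(z_0))}^2\leq C\int_{\A_{R/2}\cup\A_R}|\nabla u|^2\,dx\leq 2CM.
\]
This uses $R/2\geq R_*\geq1$, so that both annuli are admissible in \eqref{eq:annular-assumption}. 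The bound is uniform in $z_0$, because $\A_R$ is unbounded in $z$.

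\emph{Conclusion.} Substituting both bounds into \eqref{eq:scaled-wtheta-energy} gives
\[
\|\nabla\omega_\theta\|_{L^2(\Q_R'(z_0))}^2\leq CR^{-2}\cdot C(M)R^{1/2}[\Lr R]^{1/2}\cdot CM=C(M)R^{-3/2}[\Lr R]^{1/2},
\]
and taking square roots yields the claim.

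There is no real obstacle. The only points to check are that $R\geq2R_*$ keeps $\Q_R(z_0)$ inside the region where Proposition \ref{prop:CPZ} applies, and that the cylinder $\Q_R(z_0)$ is covered by at most two annuli $\A_\rho$ with $\rho\geq1$. One might also worry that the energy bound is crude, since it uses only the global Dirichlet control and no decay of the vorticity. It nonetheless gives exactly the stated exponent $R^{-3/4}[\Lr R]^{1/4}$, so no refinement is needed.
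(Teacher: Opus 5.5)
Your proposal is correct and follows the paper's proof essentially verbatim. You use $\Q_R(z_0)\subset\A_{R/2}\cup\A_R$ together with the annular hypothesis to get $\|(\omega_r,\omega_\theta)\|_{L^2(\Q_R(z_0))}^2\le CM$, use \eqref{eq:known-u} to get $U_R(z_0)\le C(M)R^{-1/2}[\Lr R]^{1/2}$, and then substitute both into Lemma \ref{lem:wtheta-energy}. One detail is worth stating explicitly: evaluating the logarithm at $R/2$ is legitimate because $r\mapsto r^{-1/2}[\Lr r]^{1/2}$ is decreasing, and your bound on $U_R(z_0)$ relies on this implicitly.
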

	
	\begin{proof}
		Since $\Q_R(z_0) \subset \A_{R/2}\cup\A_R$ and $|\omega|\leq C|\nabla u|$, the annular assumption \eqref{eq:annular-assumption} implies that for $R\geq2$, 
		\[
		\norm{(\omega_r,\omega_\theta)}_{L^2(\Q_R(z_0))}^2\leq CM.
		\]
		By \eqref{eq:known-u}, we have for $R \geq 2R_*$,
		\[
		U_R(z_0)\leq C(M) R^{-1/2}[\Lr R]^{1/2}.
		\]
		Lemma \ref{lem:wtheta-energy} therefore yields that for $R \geq 2R_* $, 
		\[
		\norm{\nabla\omega_\theta}_{L^2(\Q_R'(z_0))}^2\leq C(M) R^{-2}\left(1+R^{1/2}[\Lr R]^{1/2}\right)\leq C(M) R^{-3/2}[\Lr R]^{1/2}.
		\]
	\end{proof}
	
	\subsection{Proof of Theorem \ref{thm:main}}
	
	\begin{proof}[Proof of Theorem \ref{thm:main}]
	Let
	\[
	b=u_re_r+u_ze_z,\qquad F=\omega_\theta e_\theta.
	\]
	Then $\diver b=0$, $\curl b=F$, and $\diver F=0$.
	
	\noindent\underline{\bf Improved decay of the meridional velocity gradient.} We first prove \eqref{eq:grad-b-final}. Note $x_0=(R,0,z_0)$. By Proposition \ref{prop:CPZ}, Lemma \ref{lem:refined-CZ} applies to $f=\omega_\theta$ with
	\[
	a=\frac54,\qquad \beta=\frac34,\qquad A=A_*(M),
	\]
	and with the fixed core radius $R_*$ and core bound $B_*$ from \eqref{eq:wtheta-core-bound}. Moreover, Corollary \ref{cor:annular-gradient} shows that the quantity $\mathcal G_R(z_0)$ defined in \eqref{eq:GR-definition} satisfies
	\begin{align}
		\mathcal G_R(z_0)\leq C(M)R^{-3/4}[\Lr R]^{1/4}, \quad R\geq 2R_*.
		\label{eq:nabla-omega-theta}
	\end{align}
	
	Since $b$ is bounded and vanishes at infinity, while $F=\curl b$ is smooth, bounded, and satisfies \eqref{eq:F-anisotropic-decay} by Proposition \ref{prop:CPZ}, all the hypotheses of Lemma \ref{lem:differentiated-Biot-Savart} are satisfied. Hence, by \eqref{eq:differentiated-BS},
	\begin{align} \label{eq:BS-nabla-b}
		\partial_kb_i(x_0)=A_{ijk}F_j(x_0)+\pv\int_{\R^3}K_{ijk}(x_0-y)F_j(y)\,dy.
	\end{align}
	Since $|F(x_0)|=|\omega_\theta(x_0)|$, we infer from Proposition \ref{prop:CPZ} that for $R\geq R_*$,
	\begin{align} \label{eq:F}
		|A_{ijk}F_j(x_0)|\leq C(M)R^{-5/4}[\Lr R]^{3/4}.
	\end{align}
	Applying Lemma \ref{lem:refined-CZ}, for $R\geq\max\{32,2R_*\}$ we have
	\begin{align}
		\left|\pv\int_{\R^3}K_{ijk}(x_0-y)F_j(y)\,dy\right| &\leq C(M)R^{-5/4}[\Lr R]^{3/4}+CB_*R_*^2R^{-2}\nonumber\\
		&\quad+CR^{-1/2}[\Lr R]\mathcal G_R(z_0). \label{eq:KF}
	\end{align}
	The last term is precisely where the improved logarithmic power enters. Indeed, by \eqref{eq:nabla-omega-theta}, we have
	\begin{align}
		R^{-1/2}[\Lr R]\mathcal G_R(z_0) &\leq C(M)R^{-1/2}[\Lr R]R^{-3/4}[\Lr R]^{1/4}\nonumber\\
		&=C(M)R^{-5/4}[\Lr R]^{5/4}. \label{eq:KF2}
	\end{align}
	Combining \eqref{eq:BS-nabla-b}--\eqref{eq:KF2}, we obtain
	\begin{equation}
		|\nabla b(x_0)|\leq C(M)R^{-5/4}[\Lr R]^{5/4}+CB_*R_*^2R^{-2}, \quad R\geq\max\{32,2R_*\}.
		\label{eq:improved-grad-b-preabsorb}
	\end{equation}
	
	Since
	\[
	R^{-2}=o\!\left(R^{-5/4}[\Lr R]^{5/4}\right),\quad\text{as }R\to\infty,
	\]
	we may choose $R_3=R_3(M,u)\geq\max\{32,2R_*\}$ sufficiently large so that the core term in \eqref{eq:improved-grad-b-preabsorb} is absorbed for every $R\geq R_3$. Consequently,
	\begin{equation}
		|\nabla b(x_0)|\leq C(M)R^{-5/4}[\Lr R]^{5/4},\qquad R\geq R_3.
		\label{eq:improved-grad-b}
	\end{equation}
	Note that
	\[
	|\partial_ru_r|+|\partial_zu_r|+|\partial_ru_z|+|\partial_zu_z|\leq C\bigl(|\partial_rb|+|\partial_zb|\bigr)\leq C|\nabla b|, 
	\]
	which, along with \eqref{eq:improved-grad-b}, gives
	\[
	|\partial_ru_r(R,z_0)|+|\partial_zu_r(R,z_0)|+|\partial_ru_z(R,z_0)|+|\partial_zu_z(R,z_0)|\leq C(M)R^{-5/4}[\Lr R]^{5/4}.
	\]
	This is \eqref{eq:grad-b-final}.
	
	\noindent\underline{\bf Improved decay of the meridional vorticity.} We next prove \eqref{eq:wm-final}. Fix $R>0$ and $z_0\in\R$, and introduce the scaling
	\[
	\widetilde x'=\frac{x'}{R},\qquad \widetilde z=\frac{z-z_0}{R},\qquad \widetilde u(\widetilde x)=Ru(x',z),\qquad \widetilde\omega(\widetilde x)=R^2\omega(x',z).
	\]
	Under this, $\Q_R(z_0)$, $\Q_R'(z_0)$, and $\Q_R''(z_0)$ become the fixed cylinders $\widetilde\Q_1$, $\widetilde\Q_2$, and $\widetilde\Q_3$ introduced after \eqref{eq:fixed-meridional-domains}. Their meridional sections satisfy $\widetilde D_3\Subset\widetilde D_2\Subset\widetilde D_1$ with fixed positive separation distances. For an axisymmetric scalar function $h=h(\widetilde r,\widetilde z)$, we have
	\begin{equation}
		\|h\|_{L^2(\widetilde\Q_j)}^2=2\pi\int_{\widetilde D_j}|h(\widetilde r,\widetilde z)|^2\widetilde r\,d\widetilde r\,d\widetilde z, \quad j=1,2,3.
		\label{eq:two-three-dimensional-norm}
	\end{equation}
	The same identity holds for meridional derivatives. 
	
	Let $\widetilde W:=(\widetilde\omega_r,\widetilde\omega_z)$ and $\widetilde U_m:=(\widetilde u_r,\widetilde u_z)$. Repeating the fixed-scale localized energy argument of \cite[formula (3.10)]{CPZ2020}, with a cut-off $\psi\in C_c^\infty(\widetilde\Q_1)$ satisfying $\psi=1$ on $\widetilde\Q_2$, gives
	\begin{equation}
		\|\widetilde\nabla\widetilde W\|_{L^2(\widetilde D_2)}^2\leq C\left(1+\|\widetilde U_m\|_{L^\infty(\widetilde\Q_1)}+\|\widetilde\nabla\widetilde U_m\|_{L^\infty(\widetilde\Q_1)}\right)\|\widetilde W\|_{L^2(\widetilde\Q_1)}^2.
		\label{eq:CPZ-gradient-energy}
	\end{equation}
	The constant depends only on the fixed separation of $\widetilde\Q_2$ from $\partial\widetilde\Q_1$. The proof follows by testing the scaled $\omega_r$ and $\omega_z$ equations against $\omega_r\psi^2$ and $\omega_z\psi^2$, integrating the transport terms by parts, and estimating the stretching and cut-off terms as in \cite[(3.5), (3.8), and (3.10)]{CPZ2020}. Since $\widetilde D_2\subset\widetilde D_1$, \eqref{eq:two-three-dimensional-norm} gives $\|\widetilde W\|_{L^2(\widetilde D_2)}\leq C\|\widetilde W\|_{L^2(\widetilde\Q_1)}$. Together with \eqref{eq:CPZ-gradient-energy}, this implies
	\begin{equation}
		\|\widetilde W\|_{H^1(\widetilde D_2)}\leq C\left(1+\|\widetilde U_m\|_{L^\infty(\widetilde\Q_1)}^{1/2}+\|\widetilde\nabla\widetilde U_m\|_{L^\infty(\widetilde\Q_1)}^{1/2}\right)\|\widetilde W\|_{L^2(\widetilde\Q_1)}.
		\label{eq:CPZ-local-H1}
	\end{equation}
	Combining Lemma \ref{lem:localized-BG} with \eqref{eq:CPZ-local-H1}, we obtain
	\begin{equation}
		\|\widetilde W\|_{L^\infty(\widetilde\Q_3)}\leq C\widetilde H_m\left[\log\left(\ee+\|\widetilde\Delta\widetilde W\|_{L^2(\widetilde\Q_2)} \right)\right]^{1/2},
		\label{eq:CPZ-fixed-scale-nested}
	\end{equation}
	where
	\begin{equation*}
		\widetilde H_m:=1+\left(1+\|\widetilde U_m\|_{L^\infty(\widetilde\Q_1)}^{1/2}+\|\widetilde\nabla\widetilde U_m\|_{L^\infty(\widetilde\Q_1)}^{1/2}\right)\|\widetilde W\|_{L^2(\widetilde\Q_1)}.
	\end{equation*}
	
	For simplicity, define
	\[
	W_m(R,z_0):=\|(\omega_r,\omega_z)\|_{L^\infty(\Q_R''(z_0))},
	\]
	\[
	U_m(R,z_0):=\|(u_r,u_z)\|_{L^\infty(\Q_R(z_0))},\quad G_m(R,z_0):=\|(\nabla_{r,z}u_r,\nabla_{r,z}u_z)\|_{L^\infty(\Q_R(z_0))},
	\]
	\[
	E_m(R,z_0):=\|(\omega_r,\omega_z)\|_{L^2(\Q_R(z_0))},\quad D_m(R,z_0):=\|(\Delta_{r,z}\omega_r,\Delta_{r,z}\omega_z)\|_{L^2(\Q_R'(z_0))}.
	\]
	The scaling relations are
	\[
	\|\widetilde W\|_{L^\infty(\widetilde\Q_3)}=R^2W_m,\quad \|\widetilde W\|_{L^2(\widetilde\Q_1)}=R^{1/2}E_m,
	\]
	\[
	\|\widetilde U_m\|_{L^\infty(\widetilde\Q_1)}=RU_m,\quad \|\widetilde\nabla\widetilde U_m\|_{L^\infty(\widetilde\Q_1)}=R^2G_m,
	\]
	and
	\[
	\|\widetilde\Delta\widetilde W\|_{L^2(\widetilde\Q_2)}=R^{5/2}D_m.
	\]
	Consequently, we infer from \eqref{eq:CPZ-fixed-scale-nested} that
	\begin{equation}
		R^2W_m(R,z_0)\leq CH_m(R,z_0)\left[\log\left(\ee+R^{5/2}D_m(R,z_0) \right)\right]^{1/2},
		\label{eq:CPZ-scaled-nested}
	\end{equation}
	where
	\begin{equation}
		H_m(R,z_0):=1+R^{1/2}\left(1+R^{1/2}U_m(R,z_0)^{1/2}+RG_m(R,z_0)^{1/2}\right)E_m(R,z_0).
		\label{eq:Hm-scaled}
	\end{equation}

	Since $\Q_R(z_0)\subset\mathcal A_{R/2}\cup\mathcal A_R$ and $|\omega|\leq C|\nabla u|$, \eqref{eq:annular-assumption} gives for $R\geq2$,
	\begin{equation}
		E_m(R,z_0)\leq C(M).
		\label{eq:Em-bound}
	\end{equation}
	For $D_m$, note 
	\[
	|\Delta_{r,z}\omega_r|+|\Delta_{r,z}\omega_z|\leq C\sum_{j=0}^2|\nabla^j\omega|
	\]
	on $\Q_R'(z_0)$. Lemma \ref{lem:uniform-regularity} and $|\Q_R'(z_0)|\leq CR^3$ then imply
	\begin{equation}
		D_m(R,z_0)\leq C C_{\mathrm{reg}}R^{3/2}.
		\label{eq:Dm-polynomial}
	\end{equation}
	If $R\geq2R_3$, then every point of $\Q_R(z_0)$ has radial coordinate comparable with $R$ and at least $\max\{R_*,R_3\}$. Hence \eqref{eq:known-u} and \eqref{eq:grad-b-final} give
	\begin{equation}
		U_m(R,z_0)\leq C(M)R^{-1/2}[\Lr R]^{1/2},\qquad G_m(R,z_0)\leq C(M)R^{-5/4}[\Lr R]^{5/4}.
		\label{eq:Um-Gm-bounds}
	\end{equation}
	Combining \eqref{eq:Hm-scaled}, \eqref{eq:Em-bound}, and \eqref{eq:Um-Gm-bounds}, we obtain
	\[
	H_m(R,z_0)\leq C(M)\left(1+R^{1/2}+R^{3/4}[\Lr R]^{1/4}+R^{7/8}[\Lr R]^{5/8}\right)\leq C(M)R.
	\]
	Set $R_D:=\max\{2,CC_{\mathrm{reg}}\}$. For $R\geq R_D$, \eqref{eq:Dm-polynomial} gives $R^{5/2}D_m(R,z_0)\leq R^5$. Consequently, for $R\geq\max\{2R_3,R_D\}$, we have
	\begin{equation}
		\left[\log\left(\ee+R^{5/2}D_m(R,z_0) \right)\right]^{1/2}\leq C[\Lr R]^{1/2}.
		\label{eq:log-factor}
	\end{equation}
	Substituting \eqref{eq:Em-bound} and \eqref{eq:log-factor} into \eqref{eq:CPZ-scaled-nested}, and using \eqref{eq:Hm-scaled}, we have
	\begin{align*}
		W_m(R,z_0) &\leq C(M)R^{-2}[\Lr R]^{1/2} \nonumber\\ &\quad +C(M)R^{-3/2}[\Lr R]^{1/2}\left(1+R^{1/2}U_m(R,z_0)^{1/2}+RG_m(R,z_0)^{1/2}\right).
	\end{align*}
	The velocity contribution satisfies
	\[
	R^{-3/2}[\Lr R]^{1/2}R^{1/2}U_m(R,z_0)^{1/2}\leq C(M)R^{-5/4}[\Lr R]^{3/4},
	\]
	whereas the gradient contribution, using \eqref{eq:grad-b-final}, satisfies
	\[
	R^{-3/2}[\Lr R]^{1/2}RG_m(R,z_0)^{1/2}\leq C(M)R^{-9/8}[\Lr R]^{9/8}.
	\]
	Therefore,
	\[
	W_m(R,z_0)
	\le
	C(M)R^{-9/8}[\Lr R]^{9/8}.
	\]
	
	Finally, choose $R_0\geq\max\{2R_3,R_D\}$. Given any point $(r,\theta,z)$ with $r\geq R_0$, take $R=r$ and $z_0=z$. Then $(r,\theta,z)\in\Q_R''(z_0)$, and therefore
	\[
	|\omega_r(r,z)|+|\omega_z(r,z)|\leq C(M)r^{-9/8}[\Lr r]^{9/8}.
	\]
	This completes the proof.
    \end{proof}
	
	\section{Logarithmic endpoint Liouville criteria}
	\label{sec:liouville-endpoint}
	
	In this section, we first obtain a radial-envelope criterion, and then transfer the assumed cylindrical vorticity decay to the critical velocity decay.
	
	\subsection{A radial-envelope criterion}
	
	Recall the spherical balls $B_R$ and annuli $A_R$ from \eqref{eq:geometric-sets}. The Bogovskii operator from Lemma \ref{lem:Bogovskii} permits a divergence-free cutoff and removes the pressure term.
	
	\begin{Lem}
		\label{lem:Caccioppoli}
		Let $u$ be a smooth solution of \eqref{eq:NS}. Then
		\begin{equation}
			\int_{B_R}|\nabla u|^2\,dx
			\leq
			C\int_{A_R}|\nabla u|^2\,dx
			+\frac{C}{R^2}\int_{A_R}|u|^2\,dx
			+\frac{C}{R}\int_{A_R}|u|^3\,dx.
			\label{eq:Caccioppoli}
		\end{equation}
	\end{Lem}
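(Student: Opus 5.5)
We test the momentum equation against a divergence-free cutoff of \(u\), built with the Bogovskii operator of Lemma~\ref{lem:Bogovskii}. This removes the pressure entirely.

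\textbf{Construction of the test field.} Choose \(\phi\in C_c^\infty(B_{2R})\) with \(\phi=1\) on \(B_R\), \(0\leq\phi\leq1\) and \(|\nabla\phi|\leq C/R\). The function \(g:=u\cdot\nabla\phi\) is supported in \(A_R\). Its mean vanishes, since \(\int_{A_R}u\cdot\nabla\phi\,dx=\int_{B_{2R}}\diver(\phi u)\,dx=0\), using \(\diver u=0\) and the fact that \(\phi\) vanishes near \(\partial B_{2R}\). Set \(w:=\mathcal B_R g\in W^{1,q}_0(A_R)\), extended by zero, and define
\[
v:=\phi u-w .
\]
Then \(\diver v=\nabla\phi\cdot u-g=0\), and \(v\) is compactly supported in \(B_{2R}\). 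Lemma~\ref{lem:Bogovskii} with \(q=2\) and \(q=3\) gives
\[
\|\nabla w\|_{L^2(A_R)}\leq CR^{-1}\|u\|_{L^2(A_R)},\qquad
\|\nabla w\|_{L^3(A_R)}\leq CR^{-1}\|u\|_{L^3(A_R)}.
\]
Since \(w\) vanishes on \(\partial A_R\) and \(A_R\) has width \(R\), Poincar\'e gives \(\|w\|_{L^3(A_R)}\leq CR\|\nabla w\|_{L^3(A_R)}\leq C\|u\|_{L^3(A_R)}\). We use \(\mathcal B_R\) on smooth data, which yields a \(W^{1,q}_0\) function; this is enough for integration by parts against the smooth solution.

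\textbf{Energy identity.} Multiply \(-\Delta u+(u\cdot\nabla)u+\nabla p=0\) by \(v\) and integrate over \(B_{2R}\).
\begin{itemize}
\item The pressure term vanishes because \(\diver v=0\) and \(v\) has compact support.
\item The viscous term gives
\[
\int\nabla u:\nabla v\,dx=\int\phi|\nabla u|^2\,dx+\int\nabla u:(u\otimes\nabla\phi)\,dx-\int_{A_R}\nabla u:\nabla w\,dx .
\]
The last two terms are bounded by \(C\|\nabla u\|_{L^2(A_R)}R^{-1}\|u\|_{L^2(A_R)}\).
\item The convective term splits as follows. Since \(\diver u=0\), we have \(\int(u\cdot\nabla)u\cdot\phi u\,dx=-\frac12\int|u|^2u\cdot\nabla\phi\,dx\), which is at most \(CR^{-1}\int_{A_R}|u|^3\,dx\). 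The remaining piece is \(\int(u\cdot\nabla)u\cdot w\,dx=-\int(u\cdot\nabla)w\cdot u\,dx\), again by \(\diver u=0\) and \(w|_{\partial A_R}=0\). It is bounded by \(\|u\|_{L^3(A_R)}^2\|\nabla w\|_{L^3(A_R)}\leq CR^{-1}\|u\|_{L^3(A_R)}^3\).
\end{itemize}

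\textbf{Conclusion.} Since \(\int_{B_R}|\nabla u|^2\,dx\leq\int\phi|\nabla u|^2\,dx\), Young's inequality on the cross terms gives
\[
\int_{B_R}|\nabla u|^2\,dx\leq C\int_{A_R}|\nabla u|^2\,dx+\frac{C}{R^2}\int_{A_R}|u|^2\,dx+\frac{C}{R}\int_{A_R}|u|^3\,dx,
\]
which is \eqref{eq:Caccioppoli}.

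The main point needing care is the convective term involving \(w\). The pressure-free approach works only because this term is controlled through the \(L^3\) Bogovskii bound and the transfer of the derivative onto \(w\). Doing so yields exactly the scale \(R^{-1}\|u\|_{L^3}^3\), with no integrability of \(u\) required beyond \(A_R\).
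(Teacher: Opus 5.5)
Your proof is correct and is essentially the paper's argument: the same Bogovskii-corrected divergence-free test field $\phi u-w$, the $q=2$ bound for the viscous cross terms, the integration by parts $-\tfrac12\int|u|^2u\cdot\nabla\phi\,dx$ for the cutoff convective term, and the transfer of the derivative onto $w$ with the $q=3$ bound for the remaining convective term. The only difference is your Poincar\'e estimate for $\|w\|_{L^3(A_R)}$, which is correct but never used and can be dropped.
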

	
	\begin{proof}
		Choose $\eta_R\in C_c^\infty(B_{2R})$ such that $0\leq\eta_R\leq1$, $\eta_R=1$ on $B_R$, and $|\nabla\eta_R|\leq C/R$. Set $g_R:=u\cdot\nabla\eta_R$. Since $\diver u=0$ and $\eta_Ru$ is compactly supported, we have
		\begin{equation*}
			\int_{A_R}g_R\,dx
			=
			\int_{\R^3}\diver(\eta_Ru)\,dx
			=0.
		\end{equation*}
		Let $w_R$ be the Bogovskii correction associated with $g_R$. Since $g_R\in L^2(A_R)\cap L^3(A_R)$, we infer from Lemma \ref{lem:Bogovskii} that
		\begin{equation}
			\diver w_R=g_R,
			\qquad
			\norm{\nabla w_R}_{L^q(A_R)}\leq \frac{C_q}{R}\norm{u}_{L^q(A_R)},
			\qquad q=2,3.
			\label{eq:wR}
		\end{equation}
		Extend $w_R$ by zero outside $A_R$ (still denoted by \(w_R\)) and define $\varphi_R:=\eta_Ru-w_R$. Then $\varphi_R$ is compactly supported and divergence free. By density, it is an admissible test function for \eqref{eq:NS}; hence the pressure term vanishes and
		\begin{equation}
			\int_{\R^3}\nabla u:\nabla\varphi_R\,dx
			+
			\int_{\R^3}(u\cdot\nabla)u\cdot\varphi_R\,dx
			=0.
			\label{eq:test-identity}
		\end{equation}
		The diffusion term in \eqref{eq:test-identity} expands as
		\begin{equation}
			\int_{\R^3}\nabla u:\nabla\varphi_R\,dx
			=
			\int_{\R^3}\eta_R|\nabla u|^2\,dx
			+
			\int_{A_R}\nabla u:(u\otimes\nabla\eta_R)\,dx
			-
			\int_{A_R}\nabla u:\nabla w_R\,dx.
			\label{eq:diffusion-expansion}
		\end{equation}
		Using \eqref{eq:wR} with $q=2$, we obtain
		\begin{align}
			\left|\int_{A_R}\nabla u:(u\otimes\nabla\eta_R)\,dx\right|
			+
			\left|\int_{A_R}\nabla u:\nabla w_R\,dx\right|
			&\leq \frac{C}{R}\norm{\nabla u}_{L^2(A_R)}\norm{u}_{L^2(A_R)}\notag\\
			&\leq C\norm{\nabla u}_{L^2(A_R)}^2+\frac{C}{R^2}\norm{u}_{L^2(A_R)}^2.
			\label{eq:diffusion-bound}
		\end{align}
		For the convection term containing $\eta_Ru$, incompressibility and integration by parts give
		\begin{equation*}
			\int_{\R^3}(u\cdot\nabla)u\cdot(\eta_Ru)\,dx
			=-\frac12\int_{A_R}|u|^2u\cdot\nabla\eta_R\,dx,
		\end{equation*}
		and therefore
		\begin{equation}
			\left|\int_{\R^3}(u\cdot\nabla)u\cdot(\eta_Ru)\,dx\right|
			\leq
			\frac{C}{R}\int_{A_R}|u|^3\,dx.
			\label{eq:convection-cutoff-bound}
		\end{equation}
		Finally, using $\diver u=0$, the zero trace of $w_R$, and \eqref{eq:wR} with $q=3$, we integrate by parts to obtain
		\begin{align}
			\left|\int_{A_R}(u\cdot\nabla)u\cdot w_R\,dx\right|
			&=
			\left|\int_{A_R}u\otimes u:\nabla w_R\,dx\right|\notag\\
			&\leq
			\norm{u}_{L^3(A_R)}^2\norm{\nabla w_R}_{L^3(A_R)}
			\leq
			\frac{C}{R}\norm{u}_{L^3(A_R)}^3.
			\label{eq:convection-Bogovskii}
		\end{align}
		Substituting \eqref{eq:diffusion-expansion}, \eqref{eq:diffusion-bound}, \eqref{eq:convection-cutoff-bound}, and \eqref{eq:convection-Bogovskii} into \eqref{eq:test-identity}, and using $\eta_R=1$ on $B_R$, proves \eqref{eq:Caccioppoli}.
	\end{proof}
	
	With the pressure-free estimate established, we next prove Proposition \ref{thm:envelope} and its logarithmic endpoint consequence, Theorem \ref{thm:velocity}.
	
	\begin{proof}[Proof of Proposition \ref{thm:envelope}]
		Fix ${r_*}>1$ and split the annulus into
		\begin{equation*}
			A_R^{\mathrm{in}}({r_*}):=A_R\cap\{r\leq {r_*}\},
			\qquad
			A_R^{\mathrm{out}}({r_*}):=A_R\cap\{r>{r_*}\}.
		\end{equation*}
		For $R>2{r_*}$, every point in $A_R^{\mathrm{in}}({r_*})$ satisfies $|z|\geq\sqrt{R^2-r_*^2}$. Consequently, by the uniform vanishing of $u$ at spatial infinity, we have
		\begin{equation}
			\varepsilon_R({r_*}):=\sup_{A_R^{\mathrm{in}}({r_*})}|u|\longrightarrow0,
			\qquad\text{as }R\to\infty,
			\label{eq:epsilon}
		\end{equation}
		for every fixed ${r_*}$. Moreover, $|A_R^{\mathrm{in}}({r_*})|\leq Cr_*^2 R$. Hence, it follows from \eqref{eq:epsilon} that
		\begin{equation}
			\frac1R\int_{A_R^{\mathrm{in}}({r_*})}|u|^3\,dx
			\leq
			Cr_*^2\varepsilon_R({r_*})^3
			\longrightarrow0
			\label{eq:inner-L3}
		\end{equation}
		and
		\begin{equation}
			\frac1{R^2}\int_{A_R^{\mathrm{in}}(r_*)}|u|^2\,dx
			\leq
			\frac{Cr_*^2}{R}\varepsilon_R(r_*)^2
			\longrightarrow0.
			\label{eq:inner-L2}
		\end{equation}
		For the outer part, $|z|\leq \sqrt{4R^2-r_*^2}\leq2R$. Recalling the definition of $H$ in \eqref{eq:H}, we have
		\begin{equation}
			\frac1R\int_{A_R^{\mathrm{out}}(r_*)}|u|^3\,dx
			\leq
			C\int_{r_*}^{2R}rH(r)^3\,dr
			\leq
			C\int_{r_*}^\infty rH(r)^3\,dr.
			\label{eq:outer-L3}
		\end{equation}
		Similarly, H\"older's inequality yields
		\begin{align}
			\frac1{R^2}\int_{A_R^{\mathrm{out}}(r_*)}|u|^2\,dx
			&\leq
			\frac{C}{R}\int_{r_*}^{2R}rH(r)^2\,dr \leq
			\frac{C}{R}
			\left(\int_{r_*}^{2R}rH(r)^3\,dr\right)^{2/3}
			\left(\int_{r_*}^{2R}r\,dr\right)^{1/3}\notag\\
			&\leq
			CR^{-1/3}
			\left(\int_{r_*}^\infty rH(r)^3\,dr\right)^{2/3}
			\longrightarrow0
			\label{eq:outer-L2}
		\end{align}
		as $R\to\infty$. Applying Lemma \ref{lem:Caccioppoli}, using $\nabla u\in L^2(\R^3)$ and \eqref{eq:inner-L3}--\eqref{eq:outer-L2}, and letting $R\to\infty$, we obtain
		\begin{align*}
			\int_{\R^3}|\nabla u|^2\,dx
			\leq C\lim_{R\to\infty}
			\int_{A_R}|\nabla u|^2\,dx
			+\lim_{R\to\infty}\frac{C}{R^2}\int_{A_R}|u|^2\,dx
			+\lim_{R\to\infty} \frac{C}{R}\int_{A_R}|u|^3\,dx \leq
			C\int_{r_*}^\infty rH(r)^3\,dr.
		\end{align*}
		Hence, by letting $r_*\to\infty$ in the above inequality and using \eqref{eq:envelope-condition}, we conclude that $\int_{\R^3}|\nabla u|^2\,dx=0$. Thus $u$ is constant, and the condition $u(x)\to0$ as $|x|\to\infty$ implies $u\equiv0$.
	\end{proof}
	
	\begin{proof}[Proof of Theorem \ref{thm:velocity}]
		By \eqref{eq:velocity-endpoint}, one has
		\begin{equation*}
			\int_1^\infty rH(r)^3\,dr
			\leq
			C\int_1^\infty\frac{dr}{r[\log(\ee+r)]^{3\gamma}}.
		\end{equation*}
		The last integral is finite when $3\gamma>1$. The conclusion then follows from Proposition \ref{thm:envelope}.
	\end{proof}

	\subsection{Logarithmic decay transfer from vorticity to velocity and proof of Theorem \ref{thm:vorticity}}

	We now prove that the logarithmic factor in a cylindrical vorticity assumption is preserved when one passes from $\omega$ to $u$.

	\begin{Prop}
		\label{prop:decay-transfer}
		Let $u$ be a smooth $D$-solution of \eqref{eq:NS}, let $\omega=\curl u$, and suppose that, for some $1<\beta<2$ and $\gamma\geq0$,
		\begin{equation}
			\sup_{\substack{|x'|=r\\ z\in\R}}|\omega(x',z)|
			\leq
			Cr^{-\beta}L(r)^{-\gamma},
			\qquad r\geq1.
			\label{eq:general-vorticity-decay}
		\end{equation}
		Then
		\begin{equation}
			\sup_{\substack{|x'|=r\\ z\in\R}}|u(x',z)|
			\leq
			Cr^{1-\beta}L(r)^{-\gamma},
			\qquad r\geq4.
			\label{eq:general-velocity-decay}
		\end{equation}
	\end{Prop}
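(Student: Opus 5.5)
The plan is to start from the Biot--Savart representation in Lemma \ref{lem:Biot-Savart}(i) and integrate out the axial variable, which reduces the problem to a two-dimensional Riesz-potential estimate with logarithmic weight. Set $\Omega(\rho):=\sup_{|y'|=\rho,\,s\in\R}|\omega(y',s)|$. For $|y'|\le1$ the vorticity is still bounded, since $\omega$ is bounded by Lemma \ref{lem:uniform-regularity}; call this bound $B_0$. For $x=(x',z)$ with $|x'|=r\ge4$, we would use \eqref{eq:Biot-Savart-bound} together with
\[
\int_\R\frac{ds}{|x'-y'|^2+(z-s)^2}=\frac{\pi}{|x'-y'|},
\]
to get
\[
|u(x)|\le C\int_{\R^2}\frac{\Omega(|y'|)}{|x'-y'|}\,dy'.
\]
The right-hand side is finite because $\beta>1$. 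That finiteness is also what makes the integral in \eqref{eq:Biot-Savart} absolutely convergent, so by Lemma \ref{lem:Biot-Savart}(i) the bound applies to the smooth representative. This step is the same as in Lemma \ref{lem:Biot-Savart}(ii), except that here we must keep the decaying logarithm.

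Next we split the $y'$-plane into four regions and estimate each one:
\begin{itemize}
\item[(a)] $|y'|\le1$. Here $|x'-y'|\ge r/2$, so the contribution is at most $CB_0r^{-1}$. Since $\beta<2$, we have $r^{-1}\lesssim r^{1-\beta}L(r)^{-\gamma}$.
\item[(b)] $1\le|y'|\le r/2$. Again $|x'-y'|\ge r/2$, so the contribution is at most
\[
Cr^{-1}\int_1^{r/2}\rho^{1-\beta}L(\rho)^{-\gamma}\,d\rho .
\]
We will show this integral is $\lesssim r^{2-\beta}L(r)^{-\gamma}$; this is where $\beta<2$ is used.
\item[(c)] $|y'|\ge2r$. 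Here $|x'-y'|\ge|y'|/2$, so the contribution is at most
\[
C\int_{2r}^\infty\rho^{-\beta}L(\rho)^{-\gamma}\,d\rho\le Cr^{1-\beta}L(r)^{-\gamma},
\]
using $\beta>1$ and the fact that $L$ is increasing.
\item[(d)] $r/2\le|y'|\le2r$. Here $\Omega\lesssim r^{-\beta}L(r)^{-\gamma}$, because $L(\rho)\sim L(r)$ on this range. We then bound $\int_{|x'-y'|\le3r}|x'-y'|^{-1}\,dy'\le Cr$, which gives $Cr^{1-\beta}L(r)^{-\gamma}$.
\end{itemize}

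The one step that needs care is the weighted integral in (b); the difficulty is keeping the decaying logarithm, since simply bounding $L(\rho)^{-\gamma}\le1$ loses it. I would split at $\sqrt r$.
\begin{itemize}
\item On $[1,\sqrt r]$, bound $L^{-\gamma}\le1$ to get $\lesssim r^{(2-\beta)/2}$. Because $\beta<2$, this is $\ll r^{2-\beta}L(r)^{-\gamma}$, as a power beats a logarithm.
\item On $[\sqrt r,r/2]$, use $L(\rho)\ge L(\sqrt r)\ge\tfrac12L(r)$ to get $\lesssim L(r)^{-\gamma}r^{2-\beta}/(2-\beta)$.
\end{itemize}
Alternatively, one can integrate by parts: $\frac{d}{d\rho}\bigl(\rho^{2-\beta}L^{-\gamma}\bigr)\ge\tfrac12(2-\beta)\rho^{1-\beta}L^{-\gamma}$ for $\rho$ large, since $\rho L'(\rho)/L(\rho)\to0$. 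Either way the constants depend on $\beta,\gamma$, and the range $r\ge4$ only matters so that all the regions above are well defined. Adding the four contributions and taking the supremum over $|x'|=r$ and $z\in\R$ gives \eqref{eq:general-velocity-decay}.
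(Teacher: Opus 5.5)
Your proposal is correct and follows essentially the same route as the paper. Both use the Biot--Savart bound, integrate in the axial variable to reduce to the two-dimensional potential $\int_{\R^2}\Omega(|y'|)\,|x'-y'|^{-1}\,dy'$, and then decompose into regions as in Lemma \ref{lem:Riesz}; your ``derivative'' alternative for the inner integral is exactly the paper's argument there. The differences are minor: you bound the core $|y'|\le1$ with the $L^\infty$ bound on $\omega$ from Lemma \ref{lem:uniform-regularity} (giving $r^{-1}$), where the paper uses Cauchy--Schwarz with $\|\omega\|_{L^2(\R^3)}$ (giving $r^{-3/2}$); you also handle the logarithm by the $\sqrt r$-split and monotonicity of $L$ rather than by differentiating $s^{2-\beta}L(s)^{-\gamma}$ and $s^{1-\beta}L(s)^{-\gamma}$. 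Both variants are valid, and you additionally make explicit the absolute convergence needed to apply Lemma \ref{lem:Biot-Savart}(i).
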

	
	\begin{proof}
		Let $x=(x',z)$ with $r=|x'|\geq4$ and write $y=(y',k)$. Since $|\omega|\leq C|\nabla u|$, the $D$-condition gives $\omega\in L^2(\R^3)$. By Lemma \ref{lem:Biot-Savart}\textup{(i)}, we have
		\begin{align*}
			|u(x)| &\leq C \int_{|y'|\leq1}\frac{|\omega(y',k)|}{|x'-y'|^2+(z-k)^2}\,dy'dk + C\int_{|y'|>1}\frac{|\omega(y',k)|}{|x'-y'|^2+(z-k)^2}\,dy'dk \\
			&=:C(I_0+I_1).
		\end{align*}
		
		For $I_0$, by
		\begin{equation*}
			\int_{\R}\frac{ds}{(a^2+s^2)^2}=\frac{\pi}{2a^3},
			\qquad a>0,
		\end{equation*}
		we have
		\begin{align}
			I_0
			\leq
			\norm{\omega}_{L^2(\R^3)}
			\left(
			\int_{|y'|\leq1}\int_{\R}
			\frac{dk\,dy'}{\left(|x'-y'|^2+(z-k)^2\right)^2}
			\right)^{1/2} \leq C r^{-3/2} \leq C r^{1-\beta}L(r)^{-\gamma},
			\label{eq:I0-bound}
		\end{align}
		where we used $|x'-y'|\geq r-1\geq r/2$ on $|y'|\leq1$. For $I_1$, by \eqref{eq:general-vorticity-decay}, Lemma \ref{lem:Riesz}, and
		\begin{equation*}
			\int_{\R}\frac{dk}{a^2+(z-k)^2}=\frac{\pi}{a},
			\qquad a>0,
		\end{equation*}
		we have
		\begin{align}
			I_1
			\leq
			C\int_{\R^2}\int_{\R}
			\frac{\Phi_{\beta,\gamma}(|y'|)}{|x'-y'|^2+(z-k)^2}\,dk\,dy'
			=
			C\int_{\R^2}\frac{\Phi_{\beta,\gamma}(|y'|)}{|x'-y'|}\,dy'
			\leq
			C r^{1-\beta}L(r)^{-\gamma}
			\label{eq:I1-bound}
		\end{align}
		Combining \eqref{eq:I0-bound} and \eqref{eq:I1-bound} proves \eqref{eq:general-velocity-decay}.
	\end{proof}

	\begin{proof}[Proof of Theorem \ref{thm:vorticity}]
		Apply Proposition \ref{prop:decay-transfer} with $\beta=\frac53$. The vorticity assumption \eqref{eq:vorticity-endpoint} gives
		\begin{equation*}
			\sup_{\substack{|x'|=r\\ z\in\R}}|u(x',z)|
			\leq
			\frac{C}{r^{2/3}[\log(\ee+r)]^\gamma},
			\qquad r\geq4.
		\end{equation*}
		Since $u$ is bounded, enlarging the constant extends the same estimate to $1\leq r<4$. Theorem \ref{thm:velocity} therefore applies, and $\gamma>\frac13$ yields $u\equiv0$.
	\end{proof}

	\appendix
	
	\section{Auxiliary estimates}
	\label{sec:appendix}

	The absolute-convergence argument in Lemma \ref{lem:Biot-Savart}\textup{(ii)} uses the next elementary estimate.

	\begin{Lem}
		\label{lem:potential-estimate}
		Let $1<a<2$ and $\beta\geq0$. Then for every $x'\in\R^2$ with $r=|x'|$,
		\begin{equation}
			\int_{\R^2}\frac{(1+|y'|)^{-a}[\log(\ee+|y'|)]^\beta}{|x'-y'|}\,dy'\leq C_{a,\beta}(1+r)^{1-a}[\log(\ee+r)]^\beta.
			\label{eq:two-dimensional-potential}
		\end{equation}
	\end{Lem}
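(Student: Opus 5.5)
We bound the two-dimensional potential by splitting $\R^2$ into regions according to $\rho=|y'|$ and the distance $|x'-y'|$. Throughout, $\phi(\rho):=(1+\rho)^{-a}[\log(\ee+\rho)]^\beta$. This weight is slowly varying: $\phi(\rho)\sim\phi(r)$ whenever $\rho\sim r$, with constants depending only on $a,\beta$.

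\emph{The case $r\leq 2$.} Here it suffices to show that the integral is bounded uniformly in $x'$. On $|x'-y'|\leq 1$ we use $\phi\leq1$ and the integrability of $|x'-y'|^{-1}$ in $\R^2$. On the rest we have $|x'-y'|\geq c(1+\rho)$ (for $\rho$ large, since $|x'|\leq 2$; for bounded $\rho$ trivially). This gives a bound of order
\[
\int_0^\infty(1+\rho)^{-a-1}[\log(\ee+\rho)]^\beta\rho\,d\rho,
\]
which is finite because $a>1$.

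\emph{The case $r>2$.} We split into three regions.
\begin{itemize}
\item Inner region $\Omega_{\rm in}=\{\rho\leq r/2\}$. Here $|x'-y'|\geq r/2$, so the contribution is at most
\[
\frac{C}{r}\int_0^{r/2}(1+\rho)^{1-a}[\log(\ee+\rho)]^\beta\,d\rho\leq C r^{-1}r^{2-a}[\log(\ee+r)]^\beta,
\]
using $a<2$. The integrand is eventually increasing up to log factors, so the integral is dominated by $r^{2-a}L(r)^\beta$.
\item Middle region $\Omega_{\rm mid}=\{r/2<\rho<2r\}$. Here $\phi(\rho)\leq C\phi(r)$, and $\Omega_{\rm mid}\subset B_{3r}(x')$. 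Since
\[
\int_{B_{3r}(x')}|x'-y'|^{-1}\,dy'=6\pi r,
\]
the contribution is $\lesssim r\,\phi(r)\sim r^{1-a}L(r)^\beta$.
\item Outer region $\Omega_{\rm out}=\{\rho\geq2r\}$. Here $|x'-y'|\geq\rho/2$, so the contribution is bounded by
\[
C\int_{2r}^\infty\rho^{-a}[\log(\ee+\rho)]^\beta\,d\rho\leq C r^{1-a}[\log(\ee+r)]^\beta,
\]
using $a>1$.
\end{itemize}

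The only mildly delicate points are the two one-dimensional integrals with logarithmic factors. The tail integral $\int_{2r}^\infty\rho^{-a}\log^\beta$ is handled by writing $\rho^{-a}=\rho^{-(a-\epsilon)}\rho^{-\epsilon}$, or by integrating by parts, since $\frac{d}{d\rho}\log^\beta(\ee+\rho)\lesssim\rho^{-1}\log^{\beta-1}$ is lower order. The partial integral $\int_0^{r/2}\rho^{1-a}\log^\beta$ is handled by bounding $\log^\beta(\ee+\rho)\leq\log^\beta(\ee+r)$ and integrating $\rho^{1-a}$ explicitly. Neither step is a real obstacle, and this bookkeeping is where the dependence of the constant on $a,\beta$ enters. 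Adding the three contributions gives \eqref{eq:two-dimensional-potential}, since $(1+r)\sim r$ for $r>2$.
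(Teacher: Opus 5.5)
Your proposal is correct and follows the paper's proof essentially line by line. It uses the same reduction to a uniform bound when $r\le 2$, with $a>1$ controlling the tail, and for $r>2$ the same three regions $\{|y'|\le r/2\}$, $\{r/2<|y'|<2r\}$, $\{|y'|\ge 2r\}$ with the same bounds; your remarks on the logarithmic tail integral $\int_{2r}^\infty\rho^{-a}[\log(\ee+\rho)]^\beta\,d\rho$ also justify a step the paper simply asserts.
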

	
	\begin{proof}
		For $0\leq r\leq2$, the integral is uniformly bounded. Indeed, the singularity $|x'-y'|^{-1}$ is locally integrable in two dimensions, while for large $|y'|$ one has $|x'-y'|\gtrsim |y'|$, so the tail is controlled by
		\[
		\int_1^\infty \rho^{-a}[\log(\ee+\rho)]^\beta\,d\rho<\infty,\quad a>1.
		\]
		Since
		\[
		(1+r)^{1-a}[\log(\ee+r)]^\beta\geq c_{a,\beta}>0,\qquad 0\leq r\leq2,
		\]
		the desired estimate follows in this range.
		
		Assume now that $r>2$ and split
		\[
		D_1=\{|y'|\leq r/2\},\qquad D_2=\{r/2<|y'|<2r\},\qquad D_3=\{|y'|\geq2r\}.
		\]
		On $D_1$, $|x'-y'|\geq r/2$, and therefore
		\[
		\int_{D_1}\frac{(1+|y'|)^{-a}[\log(\ee+|y'|)]^\beta}{|x'-y'|}\,dy'\leq Cr^{-1}\int_0^{r/2}(1+\rho)^{-a}[\log(\ee+\rho)]^\beta\rho\,d\rho\leq Cr^{1-a}[\log(\ee+r)]^\beta,
		\]
		where $a<2$ is used.
		
		On $D_2$, one has $(1+|y'|)^{-a}[\log(\ee+|y'|)]^\beta\leq Cr^{-a}[\log(\ee+r)]^\beta$, and $D_2-x'$ is contained in the disk $B_{3r}(0)$. Hence
		\[
		\int_{D_2}\frac{(1+|y'|)^{-a}[\log(\ee+|y'|)]^\beta}{|x'-y'|}\,dy'\leq Cr^{-a}[\log(\ee+r)]^\beta\int_{|h|<3r}\frac{dh}{|h|}\leq Cr^{1-a}[\log(\ee+r)]^\beta.
		\]
		
		Finally, on $D_3$ one has $|x'-y'|\geq |y'|/2$, and therefore
		\[
		\int_{D_3}\frac{(1+|y'|)^{-a}[\log(\ee+|y'|)]^\beta}{|x'-y'|}\,dy'\leq C\int_{2r}^\infty \rho^{-a}[\log(\ee+\rho)]^\beta\,d\rho\leq Cr^{1-a}[\log(\ee+r)]^\beta.
		\]
		Combining the three regions, we get \eqref{eq:two-dimensional-potential}.
	\end{proof}
	
	The following Riesz-potential bound can be inserted into the three-dimensional Biot--Savart representation.
	
	\begin{Lem}
		\label{lem:Riesz}
		Let \(1<\beta<2\), \(\gamma\geq0\), and $\Phi_{\beta,\gamma}(\rho)
		:=
		\mathbf{1}_{\{\rho\geq1\}}
		\rho^{-\beta}L(\rho)^{-\gamma}$. Then there exists a constant \(C=C(\beta,\gamma)>0\) such that, for every \(x'\in\R^2\) with \(r=|x'|\geq4\),
		\begin{equation*}
			\int_{\R^2}
			\frac{\Phi_{\beta,\gamma}(|y'|)}
			{|x'-y'|}
			\,dy'
			\leq
			Cr^{1-\beta}L(r)^{-\gamma}.
		\end{equation*}
	\end{Lem}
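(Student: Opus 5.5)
The argument copies the three-region decomposition used in the proof of Lemma \ref{lem:potential-estimate}. The one real difference is that the weight now carries a \emph{negative} power of the logarithm, $L(\rho)^{-\gamma}$. That factor is not monotone in the helpful direction on the inner region, so this region needs a separate look. Fix $x'$ with $r=|x'|\geq4$ and split $\R^2$ into
\[
D_1=\{1\leq|y'|\leq r/2\},\qquad D_2=\{r/2<|y'|<2r\},\qquad D_3=\{|y'|\geq2r\}.
\]
The region $|y'|<1$ contributes nothing, since $\Phi_{\beta,\gamma}$ vanishes there.

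\emph{Regions $D_2$ and $D_3$.} On $D_2$ we have $\rho\sim r$. Hence $\rho^{-\beta}\leq Cr^{-\beta}$, and because $L(\rho)\geq L(r/2)\geq cL(r)$ we also get $L(\rho)^{-\gamma}\leq CL(r)^{-\gamma}$. Since $D_2-x'\subset B_{3r}(0)$, the kernel integral $\int_{|h|<3r}|h|^{-1}\,dh=6\pi r$ yields the bound $Cr^{1-\beta}L(r)^{-\gamma}$. On $D_3$ we have $|x'-y'|\geq\rho/2$, and $L(\rho)^{-\gamma}\leq L(r)^{-\gamma}$ because $\rho\geq r$. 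The contribution is therefore at most $CL(r)^{-\gamma}\int_{2r}^\infty\rho^{-\beta}\,d\rho\leq Cr^{1-\beta}L(r)^{-\gamma}$, where we use $\beta>1$.

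\emph{Region $D_1$ (the main point).} Here $|x'-y'|\geq r/2$, so the contribution is at most
\[
Cr^{-1}\int_1^{r/2}\rho^{1-\beta}L(\rho)^{-\gamma}\,d\rho .
\]
We cannot simply replace $L(\rho)^{-\gamma}$ by $L(r)^{-\gamma}$, because on this range $L(\rho)^{-\gamma}\geq L(r)^{-\gamma}$. Instead we exploit $\beta<2$: the integrand grows like the power $\rho^{1-\beta}$ with $1-\beta>-1$, so the integral is dominated by its upper end. Concretely, split at $\sqrt r$.

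On $[1,\sqrt r]$ we bound $L^{-\gamma}\leq1$, giving at most $C r^{(2-\beta)/2}$. This equals $r^{2-\beta}\cdot r^{-(2-\beta)/2}$, and since $r^{-(2-\beta)/2}\leq C_{\beta,\gamma}L(r)^{-\gamma}$ (a power beats a logarithm), this piece is at most $C r^{2-\beta}L(r)^{-\gamma}$.

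On $[\sqrt r,r/2]$ we have $L(\rho)\geq L(\sqrt r)\geq \tfrac12 L(r)$, so $L(\rho)^{-\gamma}\leq 2^\gamma L(r)^{-\gamma}$. The remaining integral is $\int\rho^{1-\beta}\,d\rho\leq C r^{2-\beta}/(2-\beta)$.

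Multiplying both pieces by $r^{-1}$ gives $Cr^{1-\beta}L(r)^{-\gamma}$. An alternative route is integration by parts, or checking that $\rho^{2-\beta}L(\rho)^{-\gamma}$ is eventually increasing with derivative comparable to $\rho^{1-\beta}L(\rho)^{-\gamma}$. The $\sqrt r$ split is cleaner and is what I would write.

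Summing the three regions proves the lemma, with $C$ depending only on $\beta$ and $\gamma$; all blow-ups are through $(\beta-1)^{-1}$ and $(2-\beta)^{-1}$. The only delicate step is $D_1$, where the logarithmic gain must be recovered despite the wrong monotonicity of $L^{-\gamma}$, and the strict inequality $\beta<2$ is exactly what makes this possible.
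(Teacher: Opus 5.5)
Your proof is correct, and it reaches the key one-dimensional bound by a different mechanism than the paper. The paper proves $\int_1^R s^{1-\beta}L(s)^{-\gamma}\,ds\le CR^{2-\beta}L(R)^{-\gamma}$ and $\int_R^\infty s^{-\beta}L(s)^{-\gamma}\,ds\le CR^{1-\beta}L(R)^{-\gamma}$ by differentiating $s^{2-\beta}L(s)^{-\gamma}$ and $s^{1-\beta}L(s)^{-\gamma}$. The first function is increasing, with derivative comparable to the integrand, beyond some $s_0(\beta,\gamma)$, and the interval $[1,s_0]$ is absorbed into the constant. The paper then uses four regions, separating the disc $|x'-y'|\le r/2$ from the rest of the annulus $r/2<|y'|<2r$. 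You differ in three places. First, you treat that whole annulus at once via $D_2-x'\subset B_{3r}(0)$, as in Lemma \ref{lem:potential-estimate}. Second, you handle the tail by the plain monotonicity $L(\rho)^{-\gamma}\le L(r)^{-\gamma}$ for $\rho\ge r$, which shows the paper's derivative computation is unnecessary there. Third, on the inner region you split at $\sqrt r$, which is admissible since $\sqrt r\le r/2$ for $r\ge4$. On the upper piece you use $2L(\sqrt r)\ge L(r)$, valid because $(\ee+\sqrt r)^2\ge\ee+r$; on the lower piece the spare power $r^{-(2-\beta)/2}$ absorbs $L(r)^{\gamma}$. Your version is fully explicit and avoids locating $s_0$. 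The paper's version isolates the key step as a monotonicity property that only uses $sL'(s)/L(s)\to0$, so it transfers verbatim to more general slowly varying weights. Your $\sqrt r$ split instead relies on the specific comparability $L(\sqrt r)\gtrsim L(r)$ of the logarithm, which still holds, for instance, for iterated logarithms.
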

	
	\begin{proof}
		Let $F(s):=s^{2-\beta}L(s)^{-\gamma}$. A direct computation gives
		\[
		F'(s)
		=
		s^{1-\beta}L(s)^{-\gamma}
		\left(
		2-\beta
		-
		\frac{\gamma s}{(\ee+s)L(s)}
		\right).
		\]
		Then, there exists \(s_0=s_0(\beta,\gamma)\geq2\) such that
		\[
		F'(s)
		\geq
		\frac{2-\beta}{2}
		s^{1-\beta}L(s)^{-\gamma},
		\qquad s\geq s_0.
		\]
		Integrating this inequality and enlarging the constant to absorb the fixed interval \([1,s_0]\), we obtain
		\begin{equation}
			\int_1^R
			s^{1-\beta}L(s)^{-\gamma}\,ds
			\leq
			CR^{2-\beta}L(R)^{-\gamma},
			\qquad R\geq2.
			\label{eq:weighted-inner-inline}
		\end{equation}
		Similarly, setting $G(s):=s^{1-\beta}L(s)^{-\gamma}$, we have
		\[
		-G'(s)
		=
		s^{-\beta}L(s)^{-\gamma}
		\left(
		\beta-1
		+
		\frac{\gamma s}{(\ee+s)L(s)}
		\right)
		\geq
		(\beta-1)s^{-\beta}L(s)^{-\gamma}.
		\]
		Hence, integrating over \([R,\infty)\) yields
		\begin{equation}
			\int_R^\infty
			s^{-\beta}L(s)^{-\gamma}\,ds
			\leq
			CR^{1-\beta}L(R)^{-\gamma},
			\qquad R\geq2.
			\label{eq:weighted-outer-inline}
		\end{equation}
		Note that
		\begin{equation}
			L(\lambda r)\sim L(r),
			\quad
			\text{uniformly for }
			\lambda\in[1/2,2]
			\text{ and }r\geq4.
			\label{eq:log-comparability}
		\end{equation}

		Decompose \(\R^2\) into the four regions
		\begin{align*}
			E_1&:=\{|y'|\leq r/2\},\quad
			E_2:=\{|x'-y'|\leq r/2\},\\
			E_3:=\{r/2&<|y'|<2r, |x'-y'|>r/2\},\quad
			E_4:=\{|y'|\geq2r\}.
		\end{align*}
		These regions cover \(\R^2\), and their overlaps are harmless because the integrand is nonnegative. For \(E_1\), one has $|x'-y'|
		\geq
		r-|y'|
		\geq
		r/2$. Using polar coordinates, \eqref{eq:weighted-inner-inline}, and \eqref{eq:log-comparability}, we obtain
		\begin{align}
			\int_{E_1}
			\frac{\Phi_{\beta,\gamma}(|y'|)}
			{|x'-y'|}
			\,dy'
			\leq
			\frac{C}{r}
			\int_1^{r/2}
			\rho^{1-\beta}L(\rho)^{-\gamma}\,d\rho\leq
			Cr^{-1}
			\left(\frac r2\right)^{2-\beta}
			L\left(\frac r2\right)^{-\gamma} \leq
			Cr^{1-\beta}L(r)^{-\gamma}.
			\label{eq:E1}
		\end{align}
		On \(E_2\), $r\sim|y'|$. Therefore, changing variables \(h=x'-y'\), we obtain
		\begin{align}
			\int_{E_2}
			\frac{\Phi_{\beta,\gamma}(|y'|)}
			{|x'-y'|}
			\,dy'
			\leq
			Cr^{-\beta}L(r)^{-\gamma}
			\int_{|h|\leq r/2}
			\frac{dh}{|h|}\leq
			Cr^{1-\beta}L(r)^{-\gamma}.
			\label{eq:E2}
		\end{align}
		On \(E_3\), $|x'-y'|^{-1}\leq 2r^{-1}$ and $r\sim|y'|$. It follows that
		\begin{align}
			\int_{E_3}
			\frac{\Phi_{\beta,\gamma}(|y'|)}
			{|x'-y'|}
			\,dy'
			\leq
			\frac{C}{r}
			r^{-\beta}L(r)^{-\gamma}|E_3|\leq
			Cr^{1-\beta}L(r)^{-\gamma}.
			\label{eq:E3}
		\end{align}
		Finally, on \(E_4\), $|x'-y'|\geq |y'|/2$. Using polar coordinates, \eqref{eq:weighted-outer-inline}, and \eqref{eq:log-comparability}, we obtain
		\begin{align}
			\int_{E_4}
			\frac{\Phi_{\beta,\gamma}(|y'|)}
			{|x'-y'|}
			\,dy'
			\leq
			C\int_{2r}^\infty
			\rho^{-\beta}L(\rho)^{-\gamma}\,d\rho\leq
			C(2r)^{1-\beta}L(2r)^{-\gamma}\leq
			Cr^{1-\beta}L(r)^{-\gamma}.
			\label{eq:E4}
		\end{align}
		
		Combining \eqref{eq:E1}--\eqref{eq:E4} completes the proof.
	\end{proof}

	\section*{Declarations}
	\begin{itemize}
		\item \textbf{Acknowledgments} 
		W. Wang was supported by National Key R\&D Program of China (No.2023YFA1009200) and NSFC under grant 12471219.
		\item \textbf{Conflict of interest} The authors declare that they have no conflict of interest.
		\item \textbf{Data Availability} Data sharing is not applicable to this article as no datasets were generated or analyzed during the current study.
	\end{itemize}
	
	\bibliographystyle{myamsalpha}
	\bibliography{SNS-Liouville-WY2026-refs}

\end{document}